\newtheorem{theorem}{Theorem}
\newtheorem{lemma}{Lemma}[section]
\newtheorem{proposition}{Proposition}[section]
\newtheorem{statement}{Statement}[section]
\newcommand{\im}{\mathop{\rm im}\nolimits}
\newcommand{\id}{\mathop{\rm id}\nolimits}
\newcommand{\Hom}{\mathop{\rm Hom}\nolimits}
\newcommand{\Ind}{\mathop{\rm Ind}\nolimits}
\begin{document}
{\bf On a structure of non-wandering set of an $\Omega$-stable 3-diffeomorphism possessing a hyperbolic attractor}

{Marina Barinova, Olga Pochinka, Evgeniy Yakovlev, HSE University}

\begin{abstract} This paper belongs to a series of papers devoted to the study of the structure of the non-wandering set of an A-diffeomorphism. We study such set $NW(f)$ for an $\Omega$-stable diffeomorphism $f$, given on a closed connected 3-manifold $M^3$. Namely, we prove that if all basic sets in $NW(f)$ are trivial except attractors, then every non-trivial attractor is either one-dimensional non-orientable or two-dimensional expanding.
\end{abstract}

\section{Introduction and formulation of results}
Let $M^n$ be a smooth closed connected $n$-manifold with a Riemannian metric $d$ and $f:M^n\to M^n$ be a diffeomorphism.  A set $\Lambda\subset M^n$ is called an \textit{invariant set} if $f(\Lambda)=\Lambda$.
An invariant compact set $\Lambda \subset M^n$ is called  \textit{hyperbolic} if there is a continuous $Df$-invariant splitting of the tangent bundle $T_\Lambda M^n$ into {\em stable} and {\em unstable subbundles}
$E^s_\Lambda\oplus E^u_\Lambda$, $\dim E^s_x + \dim E^u_x = n$ ($x\in \Lambda$) such that for $i>0$ and for some fixed $C_s>0$, $C_u>0$, $0<\lambda <1$
    \[\Vert Df^i(v)\Vert \leq C_s\lambda ^i\Vert v\Vert, \quad v\in E^s_{\Lambda},\]
    \[\Vert Df^{-i}(w)\Vert \leq C_u\lambda ^i\Vert w\Vert, \quad w\in E^u_{\Lambda}.\]

The hyperbolic structure of $\Lambda$ implies the existence of stable and unstable manifolds $W^s_x$, $W^u_x$ respectively for any point $x\in \Lambda$:
 \[W^s_x = \{y\in M^n: \lim_{j\to +\infty} d(f^j(x),f^j(y))= 0 \}, \]
 \[W^u_x = \{y\in M^n: \lim_{j\to +\infty} d (f^{-j}(x),f^{-j}(y))= 0\}, \]
which are smooth injective immersions of the $E_x^s$ and $E_x^u$ into $M^n$. Moreover, $W^s_x$, $W^u_x$ are tangent to $E_x^s$ and $E_x^u$ at $x$ respectively. For $r > 0$ we will denote by $W^s_{x,r}$, $W^u_{x,r}$ the immersions of discs on the subbundles $E_x^s$,  $E_x^u$ of the radius $r$.

Recall that a point $x\in M^n$ is {\em non-wandering} if for any neighborhood $U$ of $x$ the inequation $f^n(U)\cap U\ne \emptyset $ holds for infinitely many integers $n$. Then $NW(f)$, the {\em non-wandering set} of $f$, defined as the set of all non-wandering points, is an $f$-invariant closed set.

If the non-wandering set $NW(f)$ of $f$ is hyperbolic and periodic points are dense in $NW(f)$ then $f$ is called {\it an $A$-diffeomorphism} \cite{Smale1967}. In this case the non-wandering set is a finite union of pairwise disjoint sets, called {\it basic sets}
$$NW(f)=\Lambda_1\sqcup\dots\sqcup\Lambda_m,$$
each of which is compact, invariant and topologically transitive. A basic set $\Lambda_i$ of an A-diffeomorphism $f:M^n\to M^n$ is called {\it trivial} if it coincides with a periodic orbit and {\it non-trivial} in the opposite case.

By \cite{Bowen}, every non-trivial basic set $\Lambda_i$, similarly to a periodic orbit, is uniquely expressed as a finite union of compact subsets
$$\Lambda_i=\Lambda_{i_1} \sqcup \dots \sqcup \Lambda_{i_{q_i}},q_i \geqslant 1$$  such that $f^{q_i}(\Lambda_{i_j})
=\Lambda_{i_j}, f(\Lambda_{i_j}) = \Lambda_{i_{j+1}},\,j\in\{1,\dots,q_i\}\,
(\Lambda_{i_{q_i+1}} = \Lambda_{i_1})$. These subsets $ \Lambda_{i_{q_i}},\,q_i \geqslant 1$ are called {\em periodic components} of the set
$\Lambda_i$\footnote{R. Bowen  \cite{Bowen} called these components $C$-dense.}.  For every point
$x$ of a periodic component
$\Lambda_{i_j}$ the set
$W^{s}_x\cap \Lambda_{i_j}$ ($W^{u}_x\cap
\Lambda_{i_j}$) is dense in $\Lambda_{i_j}$.

Without loss of generality, everywhere below we will assume that $\Lambda_i$ consists of a unique periodic component and, in addition, $f|_{W^u_{\Lambda_i}}$ preserves orientation if $\Lambda_i$ is trivial.

A sequence of basic sets $\Lambda_1,\dots,\Lambda_l$ of an $A$-diffeomorphism $f:M^n\to M^n$ is called {\it a cycle} if $W^s_{\Lambda_i}\cap W^u_{\Lambda_{i+1}}\neq\emptyset$ for $i=1,\dots,l$, where $\Lambda_{l+1}=\Lambda_1$. A-diffeomorphisms without cycles form the set of {\it $\Omega$-stable} diffeomorphisms; if, in addition, the stable and the unstable manifolds of every non-wandering point
intersect transversaly then $f$ is {\it structurally stable}  (see, for example, \cite{Robinson1999}).

A non-trivial basic set $\Lambda_i$ is called {\it orientable} if for any point $x\in\Lambda_i$ and any fixed numbers $\alpha > 0,\,\beta> 0$ the intersection index\footnote{Let $J^k:\mathbb{R}^k\rightarrow{M}^3$ be immersions, $D^k$ be open balls of finite radii in $\mathbb{R}^k$, $k=1,2$.
Then the restrictions $J^k:D^k\rightarrow{M}$ are embeddings and their images $W^k=J^k(D^k)$ are smooth embedded submanifolds of the manifold $M^3$. Let $U^k$ be a  tubular neighborhood of $W^k$, which are images of embeddings in $M^3$ of spaces of $(3-k)$-dimensional vector bundles on $W^k$ \cite[Chapter~4, par.~5]{Hirsch}.
Since the balls $D^k$ are contractible, then these bundles are trivial and, hence,  $U^2\setminus{W}^2$ consists of two connected components $U^2_+$ and $U^2_-$. It allows to define a function $\sigma:U^2_+\cup U^2_-\rightarrow\mathbb{Z}$, such that $\sigma(x)=1$ if $x\in U^2_+$ and $\sigma(x)=0$ if $x\in U^2_-$. If  submanifolds $W^1$ and $W^2$ intersect transversally at a point $x=J^1(t)$, $t\in D^1$ then there exists a number $\delta>0$ such that $J^1(t-2\delta,t+2\delta)\subset U^2$. The number
\[\Ind_x(W^1,W^2)=\sigma(t+\delta)-\sigma(t-\delta)\]
 is called an {\it intersection index} of submanifolds $W^1$ and $W^2$ in the point $x$. Notice, that this definition does not require orientability of the manifold $M^3$.} $W^u_{x,\alpha}\cap W^s_{x,\beta}$ is the same at all intersection points
($+1$ or $-1$) \cite{Grines1975}. Otherwise, the basic set is called {\it non-orientable}.

A basic set $\Lambda_i$ is called an {\it attractor} if there exists a compact neighborhood $U_{\Lambda_i}$ ({\it a trapping neighborhood}) of $\Lambda_i$ such that $f(U_{\Lambda_i})\subset {\rm int}\, U_{\Lambda_i}$ and $\Lambda_i=\bigcap\limits^{\infty}_{i=0}f^i(U_{\Lambda_i})$.  Due to \cite{Williams74}, a non-trivial attractor $\Lambda_i$ of $f$ is said to be {\it expanding} if $\dim\,\Lambda_i=\dim\,W^u_x$, $x\in \Lambda_i$.

The main result of this paper is following.
\begin{theorem}\label{mt} Let $f:M^3\to M^3$ be an $\Omega$-stable diffeomorphism whose basic sets  are trivial except attractors. Then every non-trivial attractor is either one-dimensional non-orientable or two-dimensional expanding.
\end{theorem}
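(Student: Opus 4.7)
I would fix a non-trivial attractor $\Lambda$ and analyse by cases on $d:=\dim W^u_x$, $x\in\Lambda$. Since $W^u_x\subseteq\Lambda$ for any point of an attractor, $\dim\Lambda\geq d$, and non-triviality forces $d\geq 1$. The hypothesis implicitly rules out the degenerate situation $\Lambda=M^3$ (which would make $f$ an Anosov diffeomorphism whose only basic set is simultaneously a non-trivial attractor and a repeller); more generally, the spectral decomposition combined with $\Omega$-stability guarantees at least one source, which cannot lie inside $\Lambda$. Hence $\Lambda\subsetneq M^3$, $\dim\Lambda\leq 2$, and $d\in\{1,2\}$.

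If $d=2$, the inclusion $W^u_x\subseteq\Lambda$ together with $\dim\Lambda\leq 2$ gives $\dim\Lambda=2=d$, so $\Lambda$ is expanding in Williams' sense and the desired branch of the conclusion is reached immediately.

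The substantive case is $d=1$. Here I must exclude both the non-expanding two-dimensional sub-case ($\dim\Lambda=2$) and the orientable one-dimensional sub-case ($\dim\Lambda=1$, $\Lambda$ orientable). The unified approach is to choose a trapping neighbourhood $U$ of $\Lambda$; then $V=M^3\setminus\Int U$ is trapping for $f^{-1}$, and the compact $f$-invariant set $R=\bigcap_{k\geq 0}f^{-k}(V)$ is non-empty and decomposes, via the spectral decomposition applied to $f^{-1}$, into basic sets of $f$ distinct from $\Lambda$. By hypothesis each of these is a trivial periodic orbit or another attractor. I would then derive a contradiction by showing that the fine structure of $\Lambda$ --- an orientable generalised solenoid after Williams in the 1-dim orientable sub-case, or a non-expanding 2-dim hyperbolic set with 1-dim unstable lamination in the other sub-case --- imposes a topological or homotopical demand on the $f^{-1}$-dynamics on $V$ that cannot be met by finitely many hyperbolic periodic orbits together with auxiliary attractors, forcing the presence of a non-trivial saddle-type basic set inside $R$.

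The hard part is making this obstruction rigorous. Concretely, one exploits that an orientable 1-dim hyperbolic attractor admits a handlebody trapping neighbourhood on which $f$ acts by an expansion of positive degree on $\pi_1$ (or on $H_1$); a dual contractive action of $f^{-1}$ on $V$ is then induced, and one must show such an action cannot be realised when $R$ consists only of trivial periodic orbits and attractors, whose combined action on the fundamental group of $V$ is essentially of finite order on each orbit. This hinges on a careful interplay between the Williams classification of hyperbolic attractors, the 3-manifold topology of handlebody complements, and the Smale ordering on basic sets inherited from $\Omega$-stability, together with the intersection-index machinery introduced to define orientability of $\Lambda$ --- which is exactly what rules out the orientable 1-dim case and, by a parallel obstruction on the stable lamination, the non-expanding 2-dim case.
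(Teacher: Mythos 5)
Your reduction of the case analysis is sound: once you know $\Lambda\subsetneq M^3$ the only dimensions to worry about are $d=\dim W^u_x\in\{1,2\}$, and if $d=2$ then $W^u_x\subset\Lambda\subsetneq M^3$ forces $\dim\Lambda=2=d$, i.e.\ $\Lambda$ is expanding. So the substantive work is, as you say, excluding the one-dimensional orientable attractor and the Anosov torus (both of which have $d=1$). But your plan for that step is only a hope, not an argument, and it is not the mechanism the paper uses. The paper's contradiction is a \emph{fixed-point counting} argument, not a fundamental-group obstruction. The key inputs are: (i) Lemma~\ref{ind}, which uses orientability of a $1$-dimensional-unstable attractor to force all fixed points of $f^m$ in $\Lambda$ to have the \emph{same} Lefschetz index, so that in the Lefschetz--Hopf formula the contributions of the unboundedly many periodic points in $\Lambda$ cannot cancel; (ii) the conclusion, via formula~\eqref{+Lef+}, that some $g^*_k$ on $H^k(\tilde M_{i_0})$ must then have an eigenvalue of absolute value $>1$ (where $\tilde M_{i_0}$ is the piece of a Franks filtration for $g=f^{-1}$ containing $\Lambda$); (iii) Proposition~\ref{dimFr}, which localizes any such eigenvalue to a relative group $H^k(M_i,M_{i-1})$, and shows that trivial basic sets contribute only nilpotent or unipotent actions there; and (iv) explicit computations (Lemmas~\ref{1z}, \ref{2z}, \ref{H=Z}, \ref{H^0}) showing that for the pair corresponding to $\Lambda$ --- a handlebody rel its boundary when $\dim\Lambda=1$, or $\mathbb T^2\times[0,1]$ rel its boundary when $\Lambda$ is an Anosov torus --- the relevant relative cohomology is too small (trivial or of rank $1$) to carry such an eigenvalue. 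None of this appears in your sketch.

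Two further things are missing. First, you invoke ``expansion of positive degree on $\pi_1$'' and a ``dual contractive action of $f^{-1}$ on $V$,'' but neither notion is made precise, and the step from there to ``this cannot be realised when $R$ consists only of trivial orbits and attractors'' is exactly where a proof is needed; the paper's route through Lefschetz numbers sidesteps any direct analysis of the dynamics on the complement. Second, the argument must be run on an orientable ambient manifold (the Poincar\'e--Lefschetz duality in Lemma~\ref{lambda_k} and the degree bookkeeping require it), so the non-orientable $M^3$ case needs the oriented double cover and a lift of $f$ together with Lemma~\ref{oro} showing the lifted attractor stays orientable. Your proposal does not address this at all. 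In short, the overall strategy (filter by dimension, analyse the $d=1$ case via a trapping neighbourhood) is compatible with the paper, but the crux --- Lefschetz--Hopf with the index-equality lemma, Franks' filtration proposition, the relative (co)homology computations, and the double-cover reduction --- is absent, and it is precisely those ingredients that make the proof work.
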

\begin{figure}[h!]
\centerline{\includegraphics
[width=9 true cm]{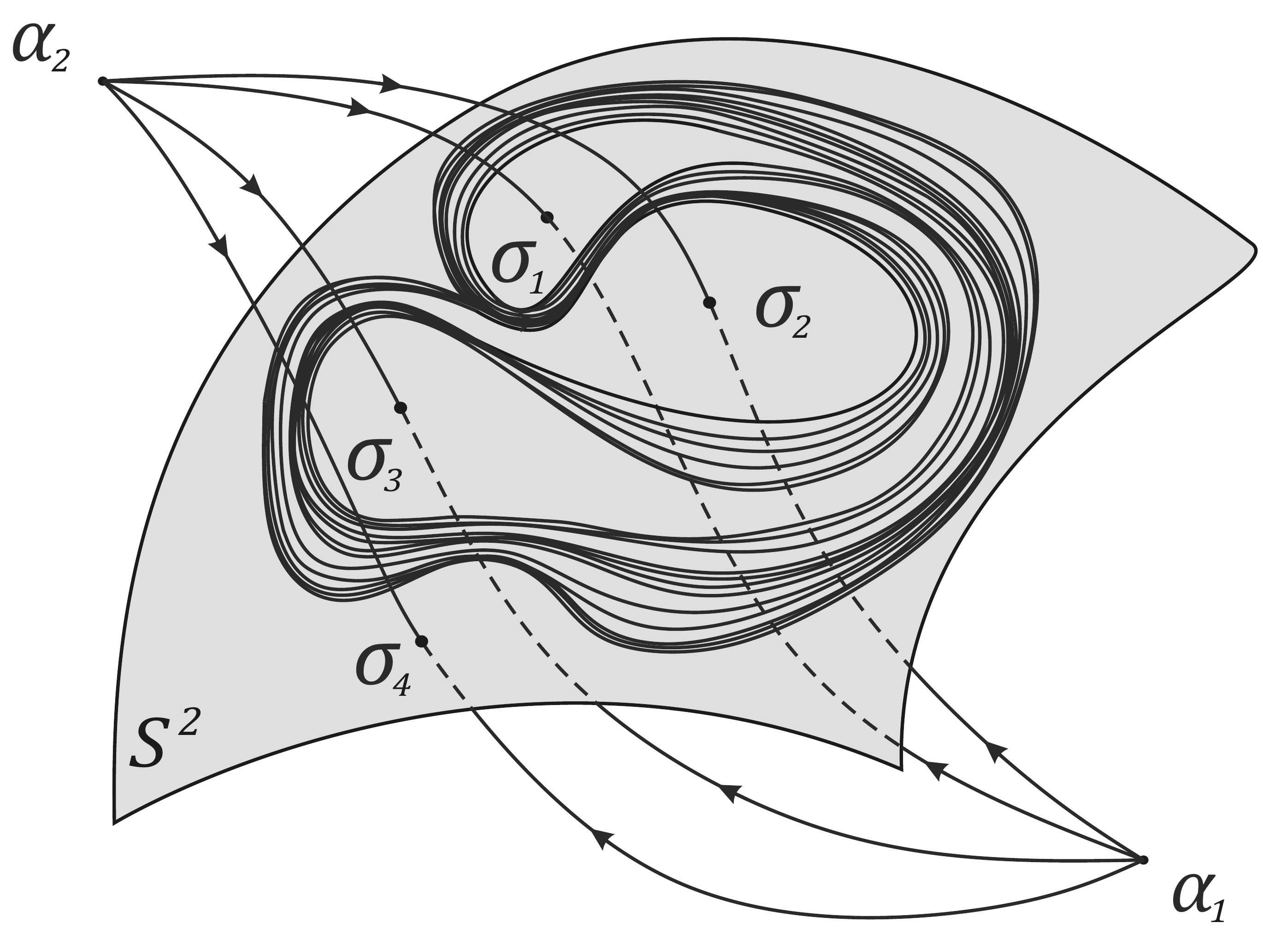}}
\caption{\small $\Omega$-stable diffeomorphism $f:\mathbb S^3\to\mathbb S^3$ with the unique non-trivial basic set which is Plykin attractor}\label{plykin_attractor}
\end{figure}
Notice, that the attractors of both types described in the Theorem \ref{mt} are realized. In particular, the Figure \ref{plykin_attractor} shows a phase portrait of a structurally stable diffeomorphism of a 3-sphere, whose non-wandering set consists of a one-dimensional non-orientable Plykin attractor, four saddle points with a two-dimensional unstable manifold and two sources. The DA-diffeomorphism of 3-torus on Figure \ref{2-dim_expanding_gray} is an example of a combination of an orientable two-dimensional expanding attractor with a source in the non-wandering set of a structurally stable diffeomorphism. An example of a diffeomorphism with non-orientable 2-dimensional expanding attractor will be constructed in section \ref{sec:example}.
\begin{figure}[h!]
\centerline{\includegraphics
[width=10cm]{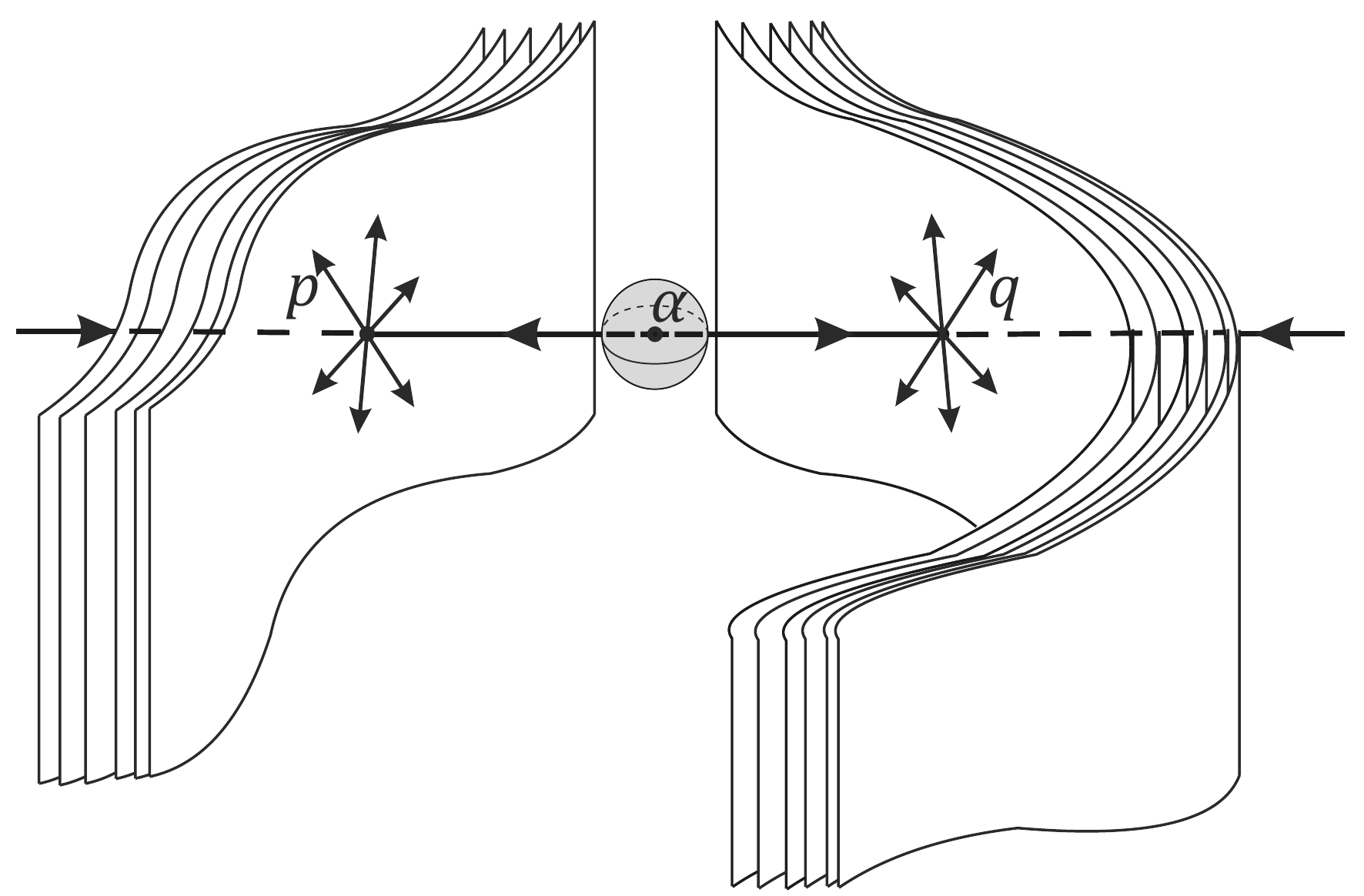}}
\caption{\small DA-map on $\mathbb T^3$}\label{2-dim_expanding_gray}
\end{figure}

\textit{Acknowledgments}. This work was supported by grant 22-11-00027, except section \ref{filtr}, whose results were  supported by the Laboratory of Dynamical Systems and Applications NRU HSE, by the Ministry of Science and Higher Education of the Russian Federation (ag. 075-15-2022-1101).

\section{Attractor, index of a hyperbolic point, filtration}
\subsection{Attractors of an A-diffeomorphism $f:M^3\to M^3$}
Let $f:M^3\to M^3$ be an $A$-diffeomorphism and $\Lambda_i$ be its  basic set. Then $$\dim\,W^u_x+\dim\,W^s_x=3,\,x\in\Lambda_i.$$ If $\Lambda_i$ is a non-trivial then, moreover, $\dim\,W^u_x>0,\,\dim\,W^s_x>0$.

Now let $\Lambda_i$ be a non-trivial attractor. It follows from  \cite{Plykin1974} that $$\Lambda_i=\bigcup\limits_{x\in \Lambda_i}W^u_x$$ and, hence, $\dim\,\Lambda_i>0$.

If  $\bf{\dim\,\Lambda_i=3}$ then $\Lambda_i=M^3\cong\mathbb T^3$ \cite{Newhouse}.

If ${\bf \dim\,\Lambda_i=2}$ then $\Lambda_i$ is either expanding (as in the Figure \ref{2-dim_expanding_gray}) or an {\it Anosov torus} ($f|_{\Lambda_i}$ is conjugate to an Anosov algebraic automorphism of a torus $\mathbb T^2$) \cite{Brown2010}, \cite{GrMeZh2005}.
Herewith, an expanding attractor $\Lambda_i$ is locally homeomorphic to the product of $\mathbb{R}^{2}$ with a cantor set  \cite{Plykin1971,Plykin1984}. There are both type of such attractor,  orientable and non-orientable \cite{MeZhu2002}.
By \cite{GrMeZh2005} every Anosov torus $\Lambda_i$ is a locally flat (possible non-smoothly \cite{KaMaYo1984}) embedded in $M^3$ and, hence, it is always orientable and has a trapping neighborhood $U_{\Lambda_i}$ which is homeomorphic to $\mathbb T^2\times[-1,1]$.

If ${\bf \dim\,\Lambda_i=1}$ then $\Lambda_i$ is automatically expanding, derived from an expansions on a 1-dimensional branched manifold \cite{Williams74} and is the nested intersections of handlebodies \cite{Bothe}. Thus, any one-dimensional attractor $\Lambda_i$ of an A-diffeomorphism $f:M^3\to M^3$ has a trapping neighborhood $U_{\Lambda_i}$ which is a handlebody. There are both type of such attractor,  orientable and non-orientable, it is enough to consider $f=f_{DA}\times f_{NS}$ (see Figure \ref{1-dim_surface_DA}) and $f=f_{Pl}\times f_{NS}$, where $f_{DA}:\mathbb T^2\to\mathbb T^2$ is derived from Anosov diffeomorphism, $f_{NS}:\mathbb S^1\to\mathbb S^1$ is a ``source-sink'' diffeomorphism, $f_{Pl}:\mathbb S^2\to\mathbb S^2$ is a  diffeomorphism with the Plykin attractor (as in the Figure \ref{plykin_attractor}) and four sources.
\begin{figure}[h!]
\centerline{\includegraphics
[width=10 true cm]{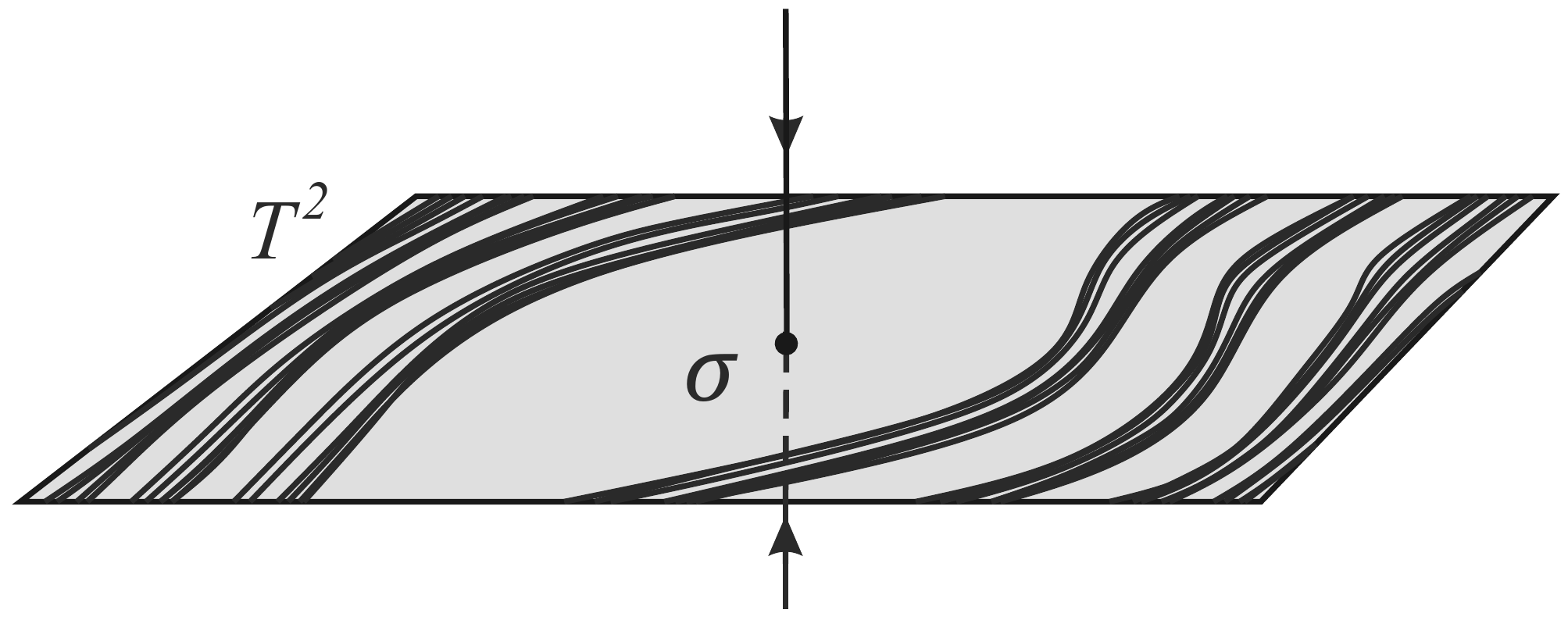}}
\caption{\small A one-dimensional attractor for a diffeomorphism $f_{DA}\times f_{NS}$}\label{1-dim_surface_DA}
\end{figure}

The most famous  one-dimensional attractor is {\it Smale solenoid} (see Figure \ref{solenoid}) which appears as intersection of the nested tori $f^k(\mathbb D^2\times\mathbb S^1),\,k\in\mathbb N$ for $f(d,z)=(d/10,2z)$. An arbitrary one-dimensional attractor is sometimes called {\it Smale-Williams solenoid}.
\begin{figure}[h!]
\centerline{\includegraphics
[width=8cm]{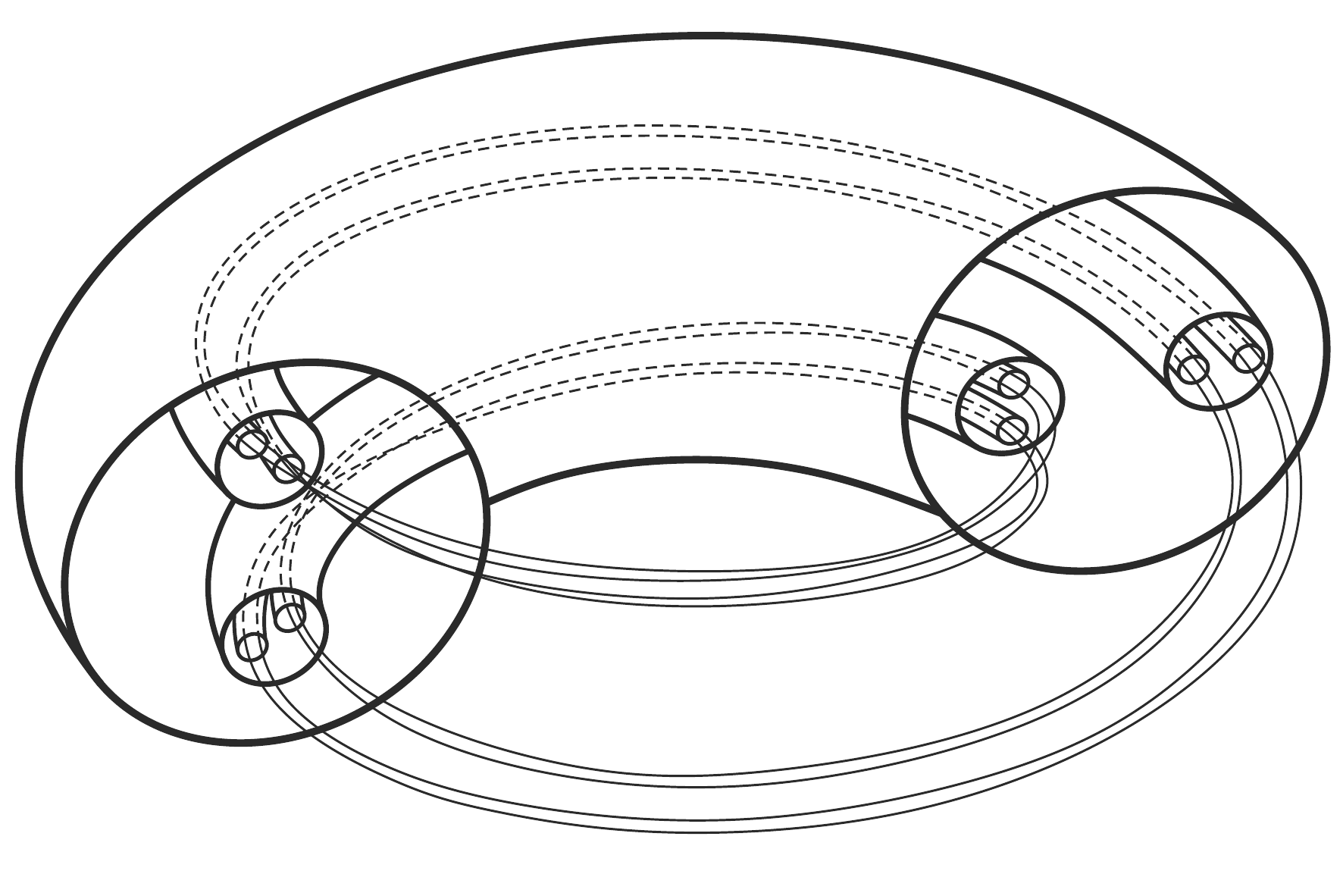}}\caption{\small Smale's solenoid}\label{solenoid}
\end{figure}

It is well known that the presence of an attractor with certain properties in a non-wandering set of an A-diffeomorphism can determine both the cha\-racter of the remaining basic sets and the topology of the ambient manifold.
\begin{itemize}
\item If $f: M^3\to M^3$ is a structurally stable diffeomorphism whose non-wandering set $NW(f)$ contains a two-dimensional expanding  attractor $\Lambda_i$, then it is  orientable, $M^3\cong\mathbb{T}^3$ and the set $NW(f)\setminus \Lambda_i$ consists of a finite number of isolated sources and saddles \cite{GrZh2002}, \cite{MeZhu2002}.
\item If $f: M^3\to M^3$ is an A-diffeomorphism whose every basic set is two-dimensional then its attractors are either all Anosov tori or all expanding \cite{BGPZ}.
\item If $f: M^3\to M^3$ is a structural stable diffeomorphism whose every basic set is two-dimensional then its attractors are all Anosov tori and $M^3$ is a mapping torus \cite{GrLeMePo2015}.
\item  An orientable manifold $M^3$ admits an A-diffeomorphism $f: M^3\to M^3$ with the non-wandering set which is a union of finitely many Smale solenoids if and only if $M^3$ is a Lens space $L_{p,q},\,p\neq 0$. Every  such a diffeomorphism is not structurally stable \cite{Jia}.
\end{itemize}

\subsection{Orientability of the basic set and index of the hyperbolic point}
In this section let $M$ be a compact smooth $n$-manifold $M$ (possibly with a non-empty boundary) and $f:M\to f(M)$ be a smooth embedding of a compact $n$-manifold $M$ to itself and $Fix(f)$ be its set of the fixed points.

Let $p\in Fix(f)$ be an isolated hyperbolic point. By \cite[Proposition 4.11]{Smale1967} the {\it index} $I(p) =I(p, f)$ of $p$ is defined by the formula $$I(p) = (-1)^{\dim\,W^u_p}\Delta_p,$$
where $\Delta_p=+1$ if $f$ preserves orientation on $W^u_p$
and $\Delta_p=-1$ if $f$ reverses it.

\begin{lemma}\label{ind} If $\Lambda_i$ is an orientable hyperbolic attractor with $\dim\, W^u_x=1,x\in\Lambda_i$ for $f$ then $I(p)=I(q)$ for any $p,q\in (Fix(f)\cap\Lambda_i)$.
\end{lemma}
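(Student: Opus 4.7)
The plan is to choose a heteroclinic point in $W^u_p \cap W^s_q$, iterate it toward $q$, and use continuity of the unstable distribution together with the orientability of $\Lambda_i$ to force $\Delta_p = \Delta_q$.

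First I produce the heteroclinic point. Since $\Lambda_i$ is a non-trivial basic set consisting of a single periodic component, $W^u_p \cap \Lambda_i$ is dense in $\Lambda_i$, and the local product structure of the hyperbolic basic set yields a point $y \in W^u_p \cap W^s_q$ with transverse intersection (automatic inside a single basic set). I set $y_n := f^n(y)$, so $y_n \in W^u_p$ for every $n$ and $y_n \to q$ as $n \to +\infty$. Since $W^u_p$ is a connected $1$-dimensional manifold (diffeomorphic to $\mathbb{R}$) invariant under $f$, the diffeomorphism $f|_{W^u_p}$ has derivative of constant sign along $W^u_p$; by definition this sign is $\Delta_p$. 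The analogous statement holds for $f|_{W^u_q}$ with sign $\Delta_q$.

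Next I compare orientations as $y_n$ approaches $q$. I fix orientations $O^u_p$ of $W^u_p$ and $O^u_q$ of $W^u_q$, and let $o_n \in E^u_{y_n}$ denote the unit tangent vector at $y_n$ to $W^u_p$ in the direction of $O^u_p$. By continuity of $E^u$ on the hyperbolic set $\Lambda_i$, any subsequential limit of $\{o_n\}$ is a unit vector in $E^u_q$, hence equals $\pm O^u_q$. The orientability hypothesis on $\Lambda_i$---which, via the intersection-index definition of the footnote, corresponds to orientability of the line bundle $E^u|_{\Lambda_i}$---forces the whole sequence $o_n$ to converge to a specific $v = \sigma_\infty O^u_q$, $\sigma_\infty \in \{+1,-1\}$, rather than oscillating because of monodromy of $E^u$. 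Now I apply $Df$: on one hand $Df_{y_n}(o_n) = c_n \Delta_p o_{n+1}$ with $c_n > 0$, reflecting that $(f|_{W^u_p})'$ has sign $\Delta_p$ and positive magnitude; on the other hand, continuity of $Df$ together with $y_n \to q$ and $o_n, o_{n+1} \to v$ give $Df_{y_n}(o_n) \to Df_q(v) = \sigma_\infty c_\infty \Delta_q O^u_q$ with $c_\infty > 0$. Passing to the limit and comparing signs yields $\sigma_\infty \Delta_q = \Delta_p \sigma_\infty$, hence $\Delta_p = \Delta_q$ and $I(p) = -\Delta_p = -\Delta_q = I(q)$.

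The main obstacle I expect is justifying convergence of $\{o_n\}$ as a whole sequence rather than merely along subsequences, as $y_n$ accumulates at $q$ from within $\Lambda_i$. This is where the orientability of $\Lambda_i$ plays its essential role: it makes the unstable line bundle $E^u|_{\Lambda_i}$ trivializable as a real line bundle, eliminating the monodromy that could otherwise flip $o_n$ infinitely often. The structural fact to verify en route is the equivalence between orientability of the attractor in the footnote's intersection-index sense and orientability of $E^u$ as a line bundle over $\Lambda_i$, which follows from the local product structure of the basic set together with density of unstable arcs in $\Lambda_i$.
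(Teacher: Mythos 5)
Your proof is correct, and it takes a genuinely different route from the paper's. The paper argues by contradiction: assuming $\Delta_p = +1$ and $\Delta_q = -1$, it picks a transverse heteroclinic point $x_1 \in W^u_q \cap W^s_p$, notes that $x_2 = f(x_1)$ lies on the opposite separatrix of $W^u_q$, uses orientability to show that small arc-neighbourhoods of $x_1$ and $x_2$ lie on opposite sides of $W^s_p$, and then invokes the $\lambda$-lemma to push these arcs onto $W^u_p$ and conclude that $f$ must swap the separatrices of $W^u_p$, i.e. $\Delta_p=-1$, a contradiction. You instead fix $y \in W^u_p \cap W^s_q$, iterate forward so that $y_n = f^n(y) \to q$, track the unit tangent $o_n$ to $W^u_p$ at $y_n$, use orientability to force convergence $o_n \to v \in E^u_q$, and finish by matching the sign of $Df_{y_n}(o_n) = c_n \Delta_p\, o_{n+1}$ against its limit $\sigma_\infty c_\infty \Delta_q O^u_q$. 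Your version is arguably more elementary — it needs only continuity of $Df$ and of $E^u$, not the $\lambda$-lemma — and it isolates cleanly where orientability enters: it is precisely what prevents $o_n$ from oscillating between $\pm O^u_q$. One simplification worth noting: the detour through orientability of $E^u|_{\Lambda_i}$ as a line bundle, while it can be made to work, is not needed. Since $y$ lies in both $W^u_p$ and $W^s_q$, one has $W^u_y = W^u_p$ and $W^s_y = W^s_q$, so the paper's intersection-index condition applied at the single base point $y$ (with $\alpha,\beta$ large) already says that the indices of the crossings of $W^u_p$ with a fixed local cooriented stable disk $W^s_{q,\beta}$ at the points $y_n$ are all equal; this pins $o_n$ to a fixed side of the disk for all large $n$ and gives convergence directly, with no need to trivialize $E^u$ globally over $\Lambda_i$.
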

\begin{proof} Suppose the contrary: there are different points $p,q\in (Fix(f)\cap\Lambda_i)$ such that $I(p)=-I(q)$. As $p,\,q$ belongs to the same basic set $\Lambda_i$ then $\dim\,W^u_p=\dim\,W^u_q$ and, hence, $\Delta_p=-\Delta_q$.  Let us assume for the definiteness that $\Delta_q=-1$ and $\Delta_p=+1$. As $\Lambda_i$ is an attractor then $W^u_p,\,W^u_q\subset\Lambda_i$, moreover, ${\rm cl}\,W^u_p={\rm cl}\,W^u_p=\Lambda_i$. Denote by $\ell^1_p,\ell^2_p;\,\ell^1_q,\ell^2_q$ the connected components  of the sets $W^u_p\setminus p;\,W^u_q\setminus q$. By \cite{GrMePo2016} every such a component is dense in $\Lambda_i$. Due to hyperbolicity of $\Lambda_i$ there is a point $x_1$ of the transversal intersection $\ell^1_q\cap W^s_p$ (see Figure \ref{index_for_orientable_attractor}).
\begin{figure}[h!]
\centerline{\includegraphics
[width=10 true cm]{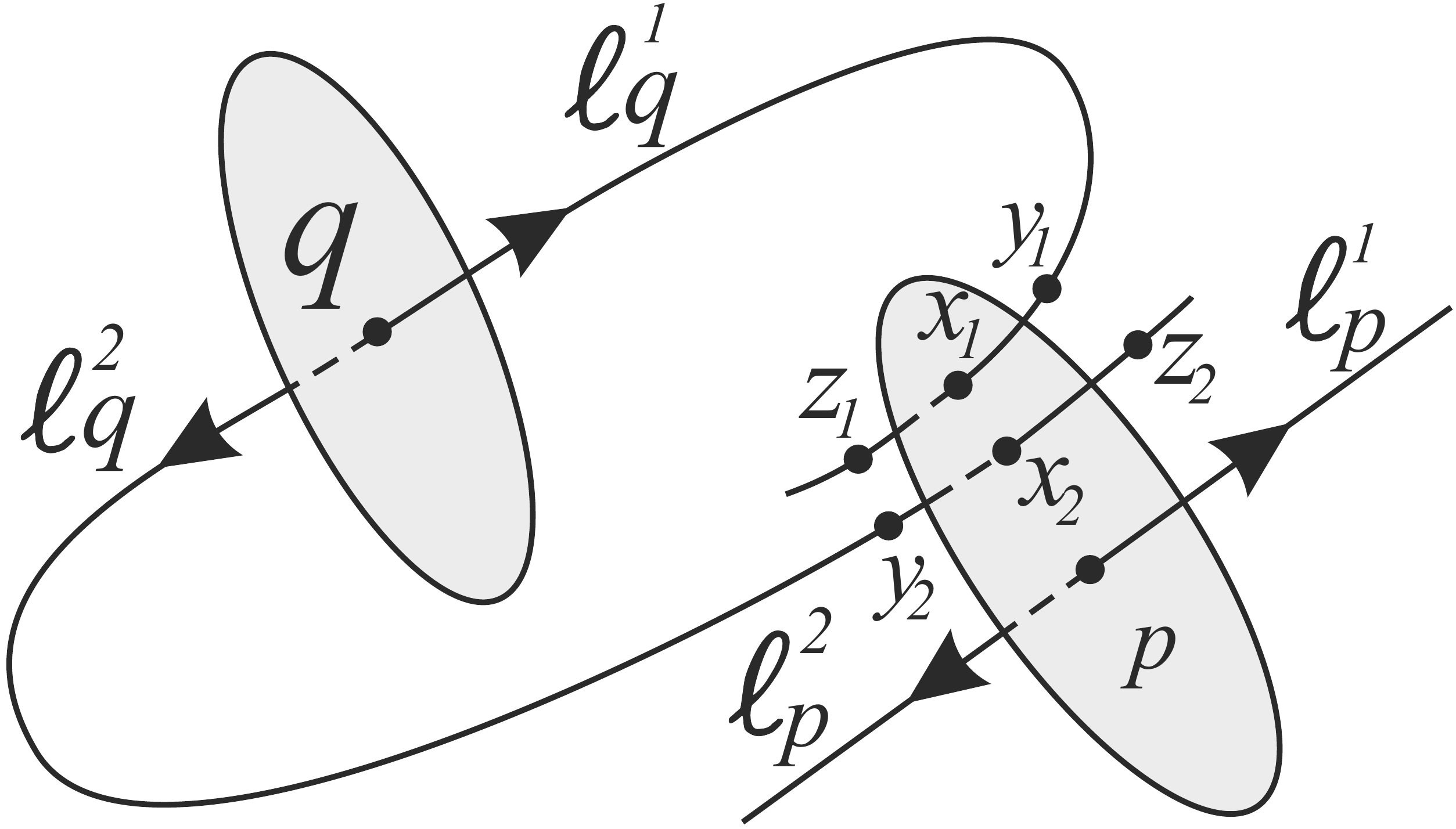}}
\caption{\small Illustration to the proof of Lemma \ref{ind}}\label{index_for_orientable_attractor}
\end{figure}

As $\Delta_q=-1$ then $x_2=f(x_1)$ belongs to $\ell^2_q$. Let $(y_1,z_1)\subset\ell^1_q$ be a neighbourhood of the point $x_1$  and $y_2=f(y_1),\,z_2=f(z_1)$. Then the arc $(y_2,z_2)\subset\ell^2_q$ be a neighbourhood of the point $x_2$. By the orientability of $\Lambda_i$ we get that $y_1,\,y_2$ are separated by $W^s_p$. By $\lambda$-lemma (see, for example,  \cite{Robinson1999}) the iteration of $(y_1,z_1),\,(y_2,z_2)$ with respect to $f$ are $C^1$-closed to $W^u_p$. By continuity of $f$ we conclude that $f(\ell^1_p)=\ell^2_p$. Thus, $\Delta_p=-1$, that contradicts to the assumption.
\end{proof}

Denote by $f_{*k}: H_k(M)\to H_k(M),\,k\in\{0,\dots,n\}$  the induced automorphism of the $k$-th homology group $H_k(M)$ of $M$ with real coefficients. The number
$$\Lambda(f)=\sum\limits_{k=0}^n(-1)^k {\rm tr}(f_{*k})$$ is called a {\it Lefschetz number} of $f$ \cite{Do}.

Suppose $f$ has only
hyperbolic fixed points and their set $Fix(f)$ is finite.
The following equality is named {\it Lefschetz-Hopf theorem}.
\begin{equation}\label{Lef}
\sum\limits_{p\in Fix(f)}I(p)=\Lambda(f).
\end{equation}
Denote by $N_m,\,m\in\mathbb N$ the number of points in $Fix(f^m)$. Let $\lambda_{*k,j},\,j\in\{1,\dots,\dim\,H_k(M)\}$ be  eigenvalues of $f_{*k}$. If $I(p,f^m)=I(q,f^m)$ for any $p,q\in Fix(f^m)$ then the
Lefschetz-Hopf theorem has the following form
\begin{equation}\label{Lef+}
N_m=\left|\sum\limits_{k=0}^n(-1)^k\left(\sum\limits_{j=1}^{\dim\,H_k(M)}\lambda_{*k,j}^m\right)\right|.
\end{equation}

Sometimes it is convenient to pass from homology groups to cohomology groups. Let us prove the following lemma for this aim.

\begin{lemma}\label{lambda_k}
Let $M$ be an $n$-dimensional orientable smooth manifold with boundary $\partial{M}$, $f:M\rightarrow{M}$ be a diffeomorphism, $k\in\{0,1,\dots,n\}$,
$f_*:H_k(M)\rightarrow H_k(M)$, $\tilde{f}_*:H_{n-k}(M,\partial{M})\rightarrow H_{n-k}(M,\partial{M})$
and $f^*:H^k(M)\rightarrow H^k(M)$ be induced automorphisms for groups with real coefficients. Then:
\begin{itemize}
\item if $\lambda$ is an eigenvalue for $f_*$, then $\tilde\lambda=\pm\lambda^{-1}$ is an eigenvalue for $\tilde{f}_*$;
\item if $\tilde\lambda$ is an eigenvalue for $\tilde{f}_*$, then $\lambda=\pm\tilde\lambda^{-1}$ is an eigenvalue for $f^*$.
\end{itemize}
In the both cases a sign $+$ corresponds to an orientation-preserving diffeomorphism and a sign $-$ is used in the opposite situation.
\end{lemma}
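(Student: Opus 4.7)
My plan is to exploit Poincar\'e--Lefschetz duality together with the naturality of the cap product. Since $M$ is orientable there is a fundamental class $[M,\partial M]\in H_n(M,\partial M)$, and capping with it yields the two duality isomorphisms
$$D\colon H^{n-k}(M,\partial M)\xrightarrow{\sim}H_k(M),\qquad D'\colon H^k(M)\xrightarrow{\sim}H_{n-k}(M,\partial M).$$
Write $\epsilon=+1$ if $f$ preserves orientation and $\epsilon=-1$ otherwise, so that $f_*[M,\partial M]=\epsilon[M,\partial M]$.

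The central computation is the naturality formula $f_*(f^*\alpha\frown c)=\alpha\frown f_*c$ applied with $c=[M,\partial M]$. Taking $\alpha\in H^{n-k}(M,\partial M)$ gives $f_*\circ D\circ f^*_{\mathrm{rel}}=\epsilon D$, so $f_*$ is similar (via $D$) to $\epsilon(f^*_{\mathrm{rel}})^{-1}$, where $f^*_{\mathrm{rel}}$ denotes the map induced on $H^{n-k}(M,\partial M)$. Taking $\alpha\in H^k(M)$ gives $\tilde f_*\circ D'\circ f^*=\epsilon D'$, so $\tilde f_*$ is similar (via $D'$) to $\epsilon(f^*)^{-1}$. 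The second of these immediately produces the second bullet: an eigenvalue $\tilde\lambda$ of $\tilde f_*$ equals $\epsilon\mu^{-1}$ for some eigenvalue $\mu$ of $f^*$, hence $\mu=\epsilon\tilde\lambda^{-1}=\pm\tilde\lambda^{-1}$ with the stated sign.

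For the first bullet I would combine the first similarity with the universal coefficient theorem. With real coefficients $H^{n-k}(M,\partial M)\cong\Hom(H_{n-k}(M,\partial M),\mathbb{R})$ and under this identification $f^*_{\mathrm{rel}}$ is the transpose of $\tilde f_*$, so the two maps share their spectrum. Combined with $f_*\sim\epsilon(f^*_{\mathrm{rel}})^{-1}$ this says that the eigenvalues $\lambda$ of $f_*$ are precisely the numbers $\epsilon\tilde\lambda^{-1}$ for eigenvalues $\tilde\lambda$ of $\tilde f_*$, giving $\tilde\lambda=\epsilon\lambda^{-1}=\pm\lambda^{-1}$ with the correct sign.

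I expect the principal obstacle to be bookkeeping: one has to distinguish the $f^*$ on $H^k(M)$ appearing in the statement from the $f^*_{\mathrm{rel}}$ on $H^{n-k}(M,\partial M)$ arising from naturality, and trace the single sign $\epsilon$ through the argument, verifying that it originates only from $f_*[M,\partial M]=\epsilon[M,\partial M]$. Once that is in hand, the two claims are formal consequences of duality, universal coefficients, and the observation that a matrix and its transpose share their eigenvalues.
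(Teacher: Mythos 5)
Your proof is correct and takes essentially the same route as the paper: both bullets are extracted from Poincare--Lefschetz duality, with the single sign traceable to $\deg f=\pm 1$, i.e.\ $f_*[M,\partial M]=\epsilon[M,\partial M]$. The paper phrases the first bullet as an intersection-form computation in dual bases of $H_k(M)$ and $H_{n-k}(M,\partial M)$, yielding $B^T=\pm A^{-1}$, whereas you route through cap-product naturality plus the universal-coefficient transpose; these are the same duality calculation in different clothing, and your second bullet coincides with the paper's use of the commuting square for the duality isomorphism $l$.
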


\begin{proof}
According to the strong part of the Poincare-Lefschetz duality groups $H_k(M)$ and  $H_{n-k}(M,\partial{M})$ have bases $e_1,\dots,e_m$ and $\varepsilon_1,\dots,\varepsilon_m$, dual with respect to the intersection form
$\Ind:H_k(M)\times H_{n-k}(M,\partial{M})\rightarrow\mathbb{R}$. The duality means that the following equalities take place
$$
\Ind(e_i,\varepsilon_j)=\delta_{ij}, \quad i,j=1,\dots,m.
$$

Let $A$ and $B$ be matrices of automorphisms $f_*$ and $\tilde{f}_*$ in the bases $e_1,\dots,e_m$ and $\varepsilon_1,\dots,\varepsilon_m$ correspondingly. Then
$$
f_*(e_i)=\sum_{s=1}^m a_{is}e_s,\quad \tilde{f}_*(\varepsilon_j)=\sum_{t=1}^m b_{jt}\varepsilon_t
$$
Herewith
\begin{equation}\label{Ind-1}
\Ind(f_*(e_i),\tilde{f}_*(\varepsilon_j))=\sum_{s,t=1}^m a_{is}b_{jt}\Ind(e_s,\varepsilon_t)=
\sum_{s,t=1}^m a_{is}b_{jt}\delta_{st}=\sum_{s=1}^m a_{is}b_{js}.
\end{equation}
On the other hand, since $\deg{f}=\pm1$, then
\begin{equation}\label{Ind-2}
\Ind(f_*(e_i),\tilde{f}_*(\varepsilon_j))=\pm\Ind(e_i,\varepsilon_j)=\pm\delta_{ij}.
\end{equation}
\eqref{Ind-1} and \eqref{Ind-2} imply $B^T=\pm A^{-1}$. Therefore, the roots of the characteristic equations $|A-\lambda E|=0$ and $|B-\tilde\lambda E|=0$ are related by the equation $\tilde\lambda=\pm\lambda^{-1}$. Thus, the first statement is proved.

For the Poincare-Lefschetz isomorphism $l:H^k(M)\rightarrow H_{n-k}(M,\partial{M})$ the following diagram is commutative
\begin{equation}\label{q-f}
\begin{CD}
H^k(M)                 @<{f^*}<<         H^k(M)\\
@V{\pm l}VV                                             @VV{l}V\\
H_{n-k}(M,\partial{M}) @>{\tilde{f}_*}>> H_{n-k}(M,\partial{M}).
\end{CD}
\end{equation}
Let $v\in H_{n-k}(M,\partial{M})$, $v\ne0$, $\tilde\lambda\in\mathbb{R}$ и $\tilde{f}_*(v)=\tilde\lambda v$. Then $\tilde{f}_*^{-1}(v)=\tilde\lambda^{-1}v$. Set $\alpha=l^{-1}(v)$. Since $l$ is an isomorphism, then $\alpha\ne0$. According to \eqref{q-f} we have
$$
f^*(\alpha)=\pm l^{-1}\circ\tilde{f}_*^{-1}\circ l(\alpha)=\pm l^{-1}\circ\tilde{f}_*^{-1}(v)=
\pm l^{-1}(\tilde\lambda^{-1}v)=\pm\tilde\lambda^{-1}l^{-1}(v)=\pm\tilde\lambda^{-1}\alpha.
$$
Thus, $\lambda=\tilde\lambda^{-1}$ is an eigenvalue of the automorphism $f^*$ corresponding to the eigenvector $\alpha\in H^k(M)$.
\end{proof}

According to the lemma proved above for the eigenvalues $\lambda^*_{k,j},\,j\in\{1,\dots,\dim\,H^k(M)\}$ of $f^{*}_{k}$  and $f^m$ such that $I(p,f^m)=I(q,f^m)$ for any $p,q\in Fix(f^m)$ the following equality takes place  \begin{equation}\label{+Lef+}
N_m=\left|\sum\limits_{k=0}^n(-1)^{n-k}\left(\sum\limits_{j=1}^{\dim\,H^k(M)}\lambda_{k,j}^{*m}\right)\right|.
\end{equation}

\subsection{Filtration}\label{filtr}
Let $f:M^n\to M^n$ be an $\Omega$-stable diffeomorphism. As $f$ has no  cycles then $\prec$ is a partial order relation  on the basic sets
$$\Lambda_i\prec\Lambda_j\iff W^s_{\Lambda_i}\cap W^u_{\Lambda_j}\neq\emptyset.$$
Intuitively the definition means that ``everything trickles down'' towards ``smaller elements''. The partial order $\prec$ extends to the order relation, i.e. the basic sets can be enumerated $\Lambda_1,\dots,\Lambda_m$ in accordance with the relation $\prec$: $$\hbox{if}\,\Lambda_i\prec\Lambda_j,\,\hbox{then}\,i\leq j.$$

We pick a sequence of nested subsets of the ambient manifold $M^n$ in the following way. Let the first subset of  $M^n$ be a neighborhood $M_1$ of the basic set $\Lambda_1$, let the next subset $M_2$ be the union of $M_1$ and some neighborhood of the unstable manifold of the element $\Lambda_2$. If we continue this process we get the entire manifold $M^n$. This construction  gives the idea to the following notion of filtration.

 A sequence $M_1,\dots,M_{m-1}$ of compact $n$-submanifolds of $M^n$, each having a smooth boundary, and such that $M^n=M_m\supset M_{m-1}\supset\dots\supset M_1\supset
M_0=\emptyset$ is called a {\em filtration} for a diffeomorphism $f$ with its ordered basic sets $\Lambda_1\prec\dots\prec\Lambda_m$ if for each $i=1,\dots,m$ the following holds:
\begin{enumerate}
\item $f(M_i)\subset {\rm int}\, M_i$;
\item $\Lambda_i\subset {\rm int}\, (M_i\setminus M_{i-1})$;
\item $\Lambda_i=\bigcap\limits_{l\in\mathbb
Z}f^l(M_i\setminus M_{i-1})$;
\item $\bigcap\limits_{l\geq
0}f^l(M_i)=\bigcup\limits_{j\leq
i}W^u_{\Lambda_j}=\bigcup\limits_{j\leq
i}cl(W^u_{\Lambda_j})$.
\end{enumerate}
Below we describe following from  \cite{Franksdim} interrelations between actions $f$ on cohomology groups $H^k(M^n)$, $H^k(M_i,M_{i-1})$  and homology group $H_k(M^n)$ with real coefficients. If an action in these group is {\it nilpotent} then all eigenvalues equal zero and if it is {\it unipotent} then it has only roots of unity
as eigenvalues.
\begin{proposition}\label{dimFr} Let $f:M^n\to M^n$ be an $\Omega$-stable diffeomorphism and $M^n=M_m\supset M_{m-1}\supset\dots\supset M_1\supset
M_0=\emptyset$ be a filtration for its ordered basic sets $\Lambda_1\prec\dots\prec\Lambda_m$. Then
\begin{enumerate}
\item If $\lambda$ is an eigenvalue of $f^*_k: H^k(M^n)\to H^k(M^n)$, then there is an $i\in\{1,\dots,m\}$  such that $f^*_k: H^k(M_i, M_{i-1})\to H^k(M_i, M_{i-1})$ has $\lambda$ as an eigenvalue.
\item If $\Lambda_i$ is a trivial basic set then  $f^*_k: H^k(M_i, M_{i-1})\to H^k(M_i, M_{i-1})$ is nilpotent unless $k =\dim\, W^u_x,\,x\in\Lambda_i$ and $f^*_k: H^k(M_i, M_{i-1})\to H^k(M_i, M_{i-1})$ is unipotent for $k =\dim\, W^u_x,\,x\in\Lambda_i$.
\end{enumerate}
\end{proposition}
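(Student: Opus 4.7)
The proposition follows Franks's approach \cite{Franksdim} applied to the filtration at hand, so my plan is to treat the two statements separately, using the induced action of $f^*$ on the long exact sequence of the pair $(M_i,M_{i-1})$ and an explicit handle model near a trivial basic set.

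For part (1), I would induct on $i$ using the long exact cohomology sequence of the pair with real coefficients,
\begin{equation*}
\cdots\to H^{k-1}(M_{i-1})\to H^k(M_i,M_{i-1})\to H^k(M_i)\to H^k(M_{i-1})\to\cdots.
\end{equation*}
Since $f(M_i)\subset {\rm int}\,M_i$ and $f(M_{i-1})\subset {\rm int}\,M_{i-1}$ by the definition of the filtration, every arrow commutes with $f^*$. The algebraic input I need is that for any short exact sequence $0\to A\to B\to C\to 0$ of finite-dimensional real vector spaces with compatible linear endomorphisms, the characteristic polynomial of the middle one equals the product of those of the outer maps. Splitting the long exact sequence into short ones via kernels and images and iterating this fact, any eigenvalue of $f^*_k$ on $H^k(M_i)$ must also be an eigenvalue either on $H^k(M_i,M_{i-1})$ or on $H^k(M_{i-1})$. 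Since $M_0=\emptyset$, a reverse induction in $i$ starting from $M_m=M^n$ produces the required relative piece.

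For part (2), I would exploit the local product structure around a hyperbolic periodic orbit. If $\Lambda_i$ is a trivial basic set of period $q$ and $\dim W^u_x=u$, one can refine the filtration so that the closure of $M_i\setminus M_{i-1}$ is a disjoint union of $q$ tubular handles of the form $D^u\times D^{n-u}$, attached to $M_{i-1}$ along $\partial D^u\times D^{n-u}$. By excision,
\begin{equation*}
H^k(M_i,M_{i-1};\mathbb{R})\cong\bigoplus_{j=1}^{q}H^k(D^u\times D^{n-u},\partial D^u\times D^{n-u};\mathbb{R})\cong\bigoplus_{j=1}^{q}H^k(D^u,\partial D^u;\mathbb{R}),
\end{equation*}
which vanishes for $k\neq u$ and equals $\mathbb{R}^q$ for $k=u$. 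For $k\neq u$ the whole group is zero, so $f^*_k$ is trivially nilpotent. For $k=u$ the diffeomorphism $f$ cyclically permutes the $q$ handles, and $f^*_u$ is therefore a signed permutation matrix, the signs recording whether $f$ preserves or reverses orientation of $W^u$ at each step. Its characteristic polynomial has the form $\lambda^q\mp 1$, whose roots are roots of unity, so $f^*_u$ is unipotent in the sense of the proposition.

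The main technical point I anticipate is in part (2), namely arranging a filtration whose piece around a trivial basic set has the clean product-of-disks form described above. This is standard machinery near hyperbolic periodic orbits, built from local linearising charts together with smoothing of the resulting corners, but some care is needed when the stable or unstable bundle is twisted along the orbit; working over $\mathbb{R}$ is what lets the relative Thom class stay one-dimensional per orbit point and reduces the twisting to a sign, which is exactly what keeps the resulting matrix a signed permutation and its eigenvalues roots of unity.
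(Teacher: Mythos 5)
The paper does not prove Proposition~\ref{dimFr}; it is stated as a summary of facts drawn from Franks's work \cite{Franksdim}, so there is no in-text argument to compare yours against. Your reconstruction is essentially the standard Franks argument and is correct in outline: part (1) uses the long exact cohomology sequence of the pair $(M_i,M_{i-1})$ with the $f$-induced endomorphisms (well defined because $f(M_i)\subset{\rm int}\,M_i$ and $f(M_{i-1})\subset{\rm int}\,M_{i-1}$), the algebraic fact that for a compatible endomorphism the characteristic polynomial of the middle term of a short exact sequence of finite-dimensional real vector spaces factors through the outer terms, and reverse induction down to $M_0=\emptyset$; part (2) reduces to the Thom class of a $\dim W^u$-handle near the periodic orbit. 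The one step that needs to be phrased more carefully is ``one can refine the filtration so that the closure of $M_i\setminus M_{i-1}$ is a disjoint union of $q$ tubular handles.'' The filtration in the proposition is given, and its $i$-th piece need not literally be a union of product handles. What one actually uses is that any two filtrations adapted to the same ordered basic sets yield shift-equivalent endomorphisms of $H^k(M_i,M_{i-1})$ (sufficiently large powers of $f$ carry either isolating block into the interior of the other), and shift-equivalent linear maps of finite-dimensional spaces have the same nonzero spectrum; thus one is free to compute with a handle model. With that substitution made explicit your argument goes through: in degree $u=\dim W^u_x$ the induced map is a signed cyclic permutation of $\mathbb{R}^q$ with characteristic polynomial $\lambda^q\mp1$, whose roots are roots of unity -- the paper's (nonstandard) reading of ``unipotent'' -- and it vanishes in all other degrees, hence is nilpotent there.
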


\section{Proof of  theorem \ref{mt}}
In this section we prove that if $f:M^3\to M^3$ is an $\Omega$-stable diffeomorphism whose basic sets  are trivial except attractors, then every non-trivial attractor is either one-dimensional non-orientable or two-dimensional expanding. We will use in this proof some results, which will be proven in the next section.
As above, the symbols $H_k(X,A)$ and $H^k(X,A)$ will denote homology and cohomology groups with real coefficients. For homology groups with integer coefficients, the notation $H_k(X,A;\mathbb{Z})$ will be used.

\begin{proof} Suppose the contrary: $NW(f)$ contains a non-trivial attractor $A$ such that $A$ is either one-dimensional orientable or two-dimensional Anosov torus. Without loss of generality we can assume that in the order $\prec$, first positions occupied by attractors and $A$ is the last of them. Let $M^n=M_k\supset M_{k-1}\supset\dots\supset M_1\supset
M_0=\emptyset$ be a filtration for the ordered basic sets $\Lambda_1\prec\dots\prec\Lambda_k$. Then $\tilde M_i=M^n\setminus {\rm int}\,M_{k-i}$ is the filtration for the basic sets $\tilde\Lambda_i=\Lambda_{k-i}$ of the diffeomorphism $g=f^{-1}$. Let $A=\tilde\Lambda_{i_0}$. Without loss of generality we can assume that the manifold $\tilde M_{i_0}$ is connected (in the opposite case let us consider its connected component containing $A$). Then $g(\tilde M_{i_0})\subset {\rm int}\,\tilde M_{i_0}$. Notice, that $i_0>1$ since any $\Omega$-stable diffeomorphism has non-empty sets of attractors and repellers.

Let $N_m$ be a number of points in $Fix(g^m)$. As the non-trivial basic set $A$ belongs to $\tilde M_{i_0}$ then  $\lim\limits_{m\to\infty}N_m=\infty$. Since $A$ is orientable then  the Lemma \ref{ind} and the formula (\ref{+Lef+}) gives the existence of an eigenvalue $\lambda$ with absolute value greater than 1 for $g^*_{k}:H^k(\tilde M_{i_0})\to H^k(\tilde M_{i_0})$ for some $k\in\{0,\dots,3\}$.

First of all, let us show that it is impossible for orientable $M^3$. We will prove it separately for each dimension $k=0,1,2,3$.

a) $k=0$. Eigenvalues of the automorphism
$g^*: H^0(\tilde M_{i_0})\to H^0(\tilde M_{i_0})$ are roots of unity by the lemma \ref{H^0}.

b) $k=3$. The group $H_3(\tilde M_{i_0};\mathbb{Z})$ is trivial when $\partial\tilde{M}_{i_0}\ne\emptyset$ and is isomorphic to $\mathbb{Z}$ when 
$\partial\tilde M_{i_0}=\emptyset$. In the first case we have
$H^3(\tilde{M}_{i_0})=0$ and so
$g^*: H^3(\tilde M_{i_0})\to H^3(\tilde M_{i_0})$ does not have eigenvalues. In the second case, $g^*=\pm\id$ by the lemma \ref{H=Z}.

c) $k=1$. Suppose, that the automorphism
$g^*:H^1(\tilde M_{i_0})\to H^1(\tilde M_{i_0})$ has an eigenvalue $\lambda$, for which $\lambda^2\ne1$. Then it follows from the item 1 of the proposition \ref{dimFr}, that there exists a number $i$, $1\leqslant{i}\leqslant{i_0}$ such that the automorphism
$g^*:H^1(M_i,M_{i-1})\rightarrow H^1(M_i,M_{i-1})$ also has the eigenvalue $\lambda$.

As all basic sets of $g$ before $A$ in the Smale order $\prec$ are trivial then by the item 2 of proposition \ref{dimFr} for $i<i_0$ we get that the automorphisms $g^*$ on $H^1(M_i,M_{i-1})$ are either nilpotent or uniponent. Hence, it is precisely the automorphism
$g^*:H^1(\tilde M_{i_0},\tilde M_{i_0-1})\rightarrow H^1(\tilde M_{i_0},\tilde M_{i_0-1})$ must have the eigenvalue $\lambda$.

Let $\dim\,A=1$. In this case $\tilde M_{i_0}=Q_g\cup\tilde M_{i_0-1}$, where $Q_g$ is a handlebody of a genus $g\geqslant 0$ such that $Q_g\cap\tilde M_{i_0-1}=\partial Q_g$.
By lemma \ref{1z} $H_1(\tilde M_{i_0},\tilde M_{i_0-1};\mathbb{Z})=0$. Then
$H^1(\tilde M_{i_0},\tilde M_{i_0-1})=0$ and therefore $\lambda$ cannot be an eigenvalue of the automorphism $g^*$.

If $\dim\,A=2$, then $\tilde M_{i_0}=Q\cup\tilde M_{i_0-1}$, where $Q\cong\mathbb T^2\times[0,1]$ and $Q\cap\tilde M_{i_0-1}=\partial Q$. In this situation by the lemma \ref{2z}
$H_1(\tilde M_{i_0},\tilde M_{i_0-1};\mathbb{Z})\cong\mathbb{Z}$. From here and from the lemma \ref{H=Z} it follows, that $g^*=\pm\id$. Thus, we obtain a contradiction for $k=1$ as well.

d) $k=2$. Let us finally assume that
$g^*:H^2(\tilde M_{i_0})\to H^2(\tilde M_{i_0})$ has as eigenvalue $\lambda$, for which $\lambda^2\ne1$.
Due to lemma \ref{lambda_k}, in such a situation the automorphism
$g^*:H^1(\tilde{M}_{i_0},\partial\tilde{M}_{i_0})\to H^1(\tilde{M}_{i_0},\partial\tilde{M}_{i_0})$
has an eigenvalue $\tilde\lambda=\pm\lambda^{-1}$.

Consider the following diagram
\begin{equation}\label{CD-f^*}
\begin{CD}
\dots @>{}>> H^0(\partial\tilde{M}_{i_0}) @>{\delta^*}>> H^1(\tilde{M}_{i_0},\partial\tilde{M}_{i_0}) @>{j^*}>> H^1(\tilde{M}_{i_0}) @>{}>> \dots\\
@.  @VV{g^*}V         @VV{g^*}V        @VV{g^*}V  @.  \\
\dots @>{}>> H^0(\partial\tilde{M}_{i_0}) @>{\delta^*}>> H^1(\tilde{M}_{i_0},\partial\tilde{M}_{i_0}) @>{j^*}>> H^1(\tilde{M}_{i_0}) @>{}>> \dots,
\end{CD}
\end{equation}
where the rows are taken from the cohomological sequence of the pair $(\tilde{M}_{i_0},\partial\tilde{M}_{i_0})$ and the vertical arrows denote the mappings induced by the diffeomorphism $g$.
All squares of the diagram are commutative, and the middle automorphism $g^*$ from \eqref{CD-f^*} has an eigenvalue $\tilde\lambda$. From this, by \cite[Lemma 3]{Franksdim} it follows that for one of the extreme vertical automorphisms of the diagram \eqref{CD-f^*} $\tilde\lambda$ is also an eigenvalue. Since $\tilde\lambda^2\ne1$, then for the automorphism
$g^*:H^1(\tilde M_{i_0})\to H^1(\tilde M_{i_0})$ this is impossible according to proven in c). Since the manifold $\tilde M_{i_0}$ is compact, its boundary $\partial\tilde{M}_{i_0}$ consists of a finite set of connected components. Then by Lemma \ref{H^0} all eigenvalues of the automorphism
$g^*:H^0(\partial\tilde M_{i_0})\to H^0(\partial\tilde M_{i_0})$
are roots of unity. Thus, in this case we also obtain a contradiction.

If $M^n$ is non-orientable then, by lemma \ref{bar f}, there is an oriented two-fold covering $p:\bar M^n\to M^n$ and a lift $\bar g:\bar M^n\to\bar M^n$ of the diffeomorphism\footnote{We have not found a reference for this fact, so we prove it in the section  \ref{sec:two-fold_covering} below.} $g$. Herewith, by lemma \ref{oro},  $\bar A=p^{-1}(A)$ is orientable, like $A$. So we can apply all arguments from an orientable case to $\bar g$ and get a contradiction. \end{proof}

\section{Homology and induced automorphisms}
In this section, we calculate the homology groups of some topological pairs and study the properties of automorphisms of cohomology groups induced by homeomorphisms.

\subsection{Calculations}
In this section we calculate relative homology for the following situation. Let $M$ and $N$ be smooth 3-manifolds with boundaries such that $P=M\cup{N}$ is connected, $M\cap{N} = \partial{M}$ and connected components of $\partial{M}$ are some of connected components of  $\partial{N}$. Let us calculate the relative homology groups of the pair $(P,N)$.

Firstly notice that $H_0(P,N;\mathbb{Z})=0$ as the manifold $P$ is connected and $N$ is not empty. For the calculation of other relative homology groups we need the following fact.
\begin{lemma}\label{reduction}
For every natural $k$ the following isomorphism takes place
$$H_k(P,N;\mathbb{Z})\cong H_k(M,\partial{M};\mathbb{Z}).$$
\end{lemma}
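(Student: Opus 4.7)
The plan is to reduce the claim to a single application of the excision theorem, completed by a collar deformation retraction. The key geometric input is the hypothesis that $\partial M$ consists of entire connected components of $\partial N$, so inside $N$ the submanifold $\partial M$ admits a closed collar staying clear of $\partial N\setminus\partial M$.

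Concretely, by the collar neighbourhood theorem I would fix an embedding $c\colon\partial M\times[0,1]\hookrightarrow N$ with $c(\partial M\times\{0\})=\partial M$ and image disjoint from $\partial N\setminus\partial M$. Write $C'=c(\partial M\times[0,1/2])$ and set $U=N\setminus C'$. A point of $N$ has a neighbourhood in $P$ contained in $N$ precisely when it is not on $\partial M=M\cap N$, so $\mathrm{int}_P N=N\setminus\partial M$. Since $N$ is closed in $P$, the closure of $U$ in $P$ equals $\overline{U}_N=N\setminus c(\partial M\times[0,1/2))$, which lies inside $N\setminus\partial M=\mathrm{int}_P N$---exactly the inclusion required by the excision theorem.

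Excision therefore gives
$$H_k(P,N;\mathbb{Z})\cong H_k(P\setminus U,N\setminus U;\mathbb{Z})=H_k(M\cup C',C';\mathbb{Z}),$$
and the canonical deformation retraction $c(x,t)\mapsto c(x,0)$ of $C'$ onto $\partial M$, extended by the identity on $M$, yields a strong deformation retraction of pairs $(M\cup C',C')\to(M,\partial M)$, hence $H_k(M\cup C',C';\mathbb{Z})\cong H_k(M,\partial M;\mathbb{Z})$ for every $k$. I do not foresee a serious obstacle: once the collar is in hand the argument is routine, and the only step that deserves care, verifying $\overline{U}\subset\mathrm{int}_P N$, is precisely where the hypothesis that $\partial M$ is a union of whole components of $\partial N$ is used---it guarantees that the collar can be placed entirely inside $N$ so that $C'$ cleanly separates $U$ from the gluing hypersurface $\partial M$.
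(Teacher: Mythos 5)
Your proof is correct and follows essentially the same route as the paper's: excise the part of $N$ lying away from $\partial M$, leaving a collar of $\partial M$, then deformation retract the collar onto $\partial M$. The only cosmetic difference is that the paper keeps the open collar $V=\phi(\partial M\times[0,1))$ and excises the closed set $B=N\setminus V$ (so the closure condition is automatic), whereas you keep the closed half-collar $C'$ and excise the open set $U=N\setminus C'$, which obliges you to verify $\overline{U}\subset\operatorname{int}_P N$ explicitly --- a check you carry out correctly.
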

\begin{proof} By \cite[Theorem 6.1, Chapter 4]{Hirsch} the boundary $\partial{N}$ possesses a collar in  $N$. As connected components of $\partial{M}$ are connected components of $\partial{N}$ then there is an embedding $\phi:\partial{M}\times[0,1)\rightarrow{N}$ such that $\phi(a,0)=a$ for every $a\in\partial{M}$. Let  $V=\phi(\partial{M}\times[0,1))$,   $B=N\setminus{V}$ and ${\rm cl}\,{B}$ be the closure of $B\subset{P}$ in $P$,  $ {\rm int}\,{N}$ be the interior of $N\subset{P}$ in $P$.
By the construction ${\rm cl}\,{B}=B\subset{N}\setminus\partial{M}={\rm int}{N}$. The excision theorem \cite[Corollary 7.4, Chapter III]{Dold} claims in such case that
$$H_k(P,N;\mathbb{Z})\cong H_k(P\setminus{B},N\setminus{B};\mathbb{Z})=H_k(M\cup{V},V;\mathbb{Z}).$$
But the pair $(M\cup{V},V)$ is homotopically equivalent to the pair $(M,\partial{M})$.
Hence, $H_k(M\cup{V},V;\mathbb{Z})\cong H_k(M,\partial{M};\mathbb{Z})$ for a natural $k$.
\end{proof}
Below we calculate $H_k(M,\partial{M};\mathbb{Z})$ in two cases: 1) $M$ is a handlebody of a genus $g\geqslant 0$, 2) $M\cong\mathbb T^2\times[0,1]$.
\begin{lemma}\label{1z} If $M$ is a handlebody of a genus $g\geqslant 0$ then
\begin{equation}
H_3(M,\partial{M};\mathbb{Z})\cong\mathbb{Z},\quad H_2(M,\partial{M};\mathbb{Z})\cong\mathbb{Z}^g,\quad
H_1(M,\partial{M};\mathbb{Z})=0.
\end{equation}
\end{lemma}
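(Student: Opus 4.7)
The plan is to compute $H_k(M,\partial M;\mathbb{Z})$ via Poincar\'e--Lefschetz duality, which is legitimate because a genus-$g$ handlebody is a compact orientable $3$-manifold with boundary. The duality gives the isomorphism
\[
H_k(M,\partial M;\mathbb{Z})\;\cong\;H^{3-k}(M;\mathbb{Z})
\]
for every $k$, so the lemma reduces to computing the (co)homology of $M$ itself.

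The first step is to record that a handlebody of genus $g$ deformation retracts onto its spine, namely a wedge of $g$ circles (for $g=0$ the spine is a point, so $M$ is contractible). Hence
\[
H_0(M;\mathbb{Z})\cong\mathbb{Z},\qquad H_1(M;\mathbb{Z})\cong\mathbb{Z}^g,\qquad H_k(M;\mathbb{Z})=0\;\text{for }k\geq 2.
\]
All these groups are free abelian, so the universal coefficient theorem yields $H^k(M;\mathbb{Z})\cong\Hom(H_k(M;\mathbb{Z}),\mathbb{Z})$ with no Ext contribution; in particular
\[
H^0(M;\mathbb{Z})\cong\mathbb{Z},\qquad H^1(M;\mathbb{Z})\cong\mathbb{Z}^g,\qquad H^2(M;\mathbb{Z})=0.
\]

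Substituting these values into the duality isomorphism gives the three claimed identifications $H_3(M,\partial M;\mathbb{Z})\cong\mathbb{Z}$, $H_2(M,\partial M;\mathbb{Z})\cong\mathbb{Z}^g$, and $H_1(M,\partial M;\mathbb{Z})=0$, which finishes the proof. There is no real obstacle here: the argument is a routine combination of the homotopy type of a handlebody with Poincar\'e--Lefschetz duality. The only points that deserve explicit mention are the orientability of $M$ (needed for duality) and the special case $g=0$, where $M\cong D^3$ and one checks the formulas by direct inspection.
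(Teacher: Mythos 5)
Your proof is correct, but it takes a genuinely different route from the paper. The paper computes $H_k(M,\partial M;\mathbb{Z})$ from the long exact homology sequence of the pair $(M,\partial M)$: it records both $H_*(M;\mathbb{Z})$ (from the homotopy equivalence of $M$ with a wedge of $g$ circles) and $H_*(\partial M;\mathbb{Z})$ (from $\partial M\cong S_g$), observes that $\imath_*^1:H_1(\partial M;\mathbb{Z})\to H_1(M;\mathbb{Z})$ is an epimorphism, and uses this to split the long sequence into three short exact sequences from which the answer is read off. You instead invoke Poincar\'e--Lefschetz duality $H_k(M,\partial M;\mathbb{Z})\cong H^{3-k}(M;\mathbb{Z})$, which is legitimate since a handlebody is a compact orientable $3$-manifold with boundary, and then only need the (co)homology of $M$ itself, obtained from the same homotopy equivalence plus the universal coefficient theorem. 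Your route is shorter and bypasses both the boundary surface and any analysis of the maps in the pair sequence; the paper's route is more elementary in that it requires no duality, and it incidentally exhibits the boundary maps, but it does depend on the (unproved, though standard) surjectivity of $\imath_*^1$. Given that the paper already employs Poincar\'e--Lefschetz duality in Lemma \ref{lambda_k}, your approach is a clean and consistent alternative.
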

\begin{proof}
As $H_3(M;\mathbb{Z})=0$ and  $H_0(M,\partial{M};\mathbb{Z})=0$ then the homological sequence of the pair
$(M,\partial M)$  has the following form \cite[Proposition 4.4, Chapter III]{Dold}:
\begin{multline}\label{pair sequence}
0\longrightarrow H_3(M,\partial{M};\mathbb{Z})\stackrel{\partial_*^3}{\longrightarrow}
H_2(\partial{M};\mathbb{Z})\stackrel{\imath_*^2}{\longrightarrow}
H_2(M;\mathbb{Z})\stackrel{\jmath_*^2}{\longrightarrow}
H_2(M,\partial{M};\mathbb{Z})\stackrel{\partial_*^2}{\longrightarrow}\\
{\longrightarrow}
H_1(\partial{M};\mathbb{Z})\stackrel{\imath_*^1}{\longrightarrow}
H_1(M;\mathbb{Z})\stackrel{\jmath_*^1}{\longrightarrow}
H_1(M,\partial{M};\mathbb{Z})\stackrel{\partial_*^1}{\longrightarrow}\\
{\longrightarrow}
H_0(\partial{M};\mathbb{Z})\stackrel{\imath_*^0}{\longrightarrow}
H_0(M;\mathbb{Z})\stackrel{\jmath_*^0}{\longrightarrow}0.
\end{multline}

Handlebody $M$ of a genus $g$ is the 3-ball with glued  $g$ 3-handles of the index 1. That is $M$ is homotopically equivalent to the bouquet of $g$ circles.
Therefore,
$$
H_2(M;\mathbb{Z})=0,\quad H_1(M;\mathbb{Z})\cong\mathbb{Z}^g,\quad H_0(M;\mathbb{Z})\cong\mathbb{Z}.
$$
On the other side the boundary
 $\partial{M}$ is homeomorphic to the surface $S_g$ of the genus  $g$.
Hence,
$$
H_2(\partial M;\mathbb{Z})\cong\mathbb{Z},\quad H_1(\partial M;\mathbb{Z})\cong\mathbb{Z}^{2g},\quad
H_0(\partial M;\mathbb{Z})\cong\mathbb{Z}.
$$
Substituting the latter in \eqref{pair sequence}, we get the exact sequence
\begin{multline}\label{pair sequence for k>0}
0\longrightarrow H_3(M,\partial M;\mathbb{Z})\stackrel{\partial_*^3}{\longrightarrow}
\mathbb{Z}\stackrel{\imath_*^2}{\longrightarrow}
0\stackrel{\jmath_*^2}{\longrightarrow}
H_2(M,\partial M;\mathbb{Z})\stackrel{\partial_*^2}{\longrightarrow}\\
{\longrightarrow}
\mathbb{Z}^{2g}\stackrel{\imath_*^1}{\longrightarrow}
\mathbb{Z}^g\stackrel{\jmath_*^1}{\longrightarrow}
H_1(M,\partial M;\mathbb{Z})\stackrel{\partial_*^1}{\longrightarrow}
\mathbb{Z}\stackrel{\imath_*^0}{\longrightarrow}
\mathbb{Z}\stackrel{\jmath_*^0}{\longrightarrow}0.
\end{multline}
As $\imath_*^1$ is an epimorphism then \eqref{pair sequence for k>0} decomposes into short exact sequences
\begin{align*}
0& \longrightarrow H_3(M,\partial M;\mathbb{Z})\stackrel{\partial_*^3}{\longrightarrow}
\mathbb{Z}\stackrel{\imath_*^2}{\longrightarrow}0,\\
0& \longrightarrow H_2(M,\partial M;\mathbb{Z})\stackrel{\partial_*^2}{\longrightarrow}
\mathbb{Z}^{2g}\stackrel{\imath_*^1}{\longrightarrow}
\mathbb{Z}^g\longrightarrow 0,\\
0& \longrightarrow H_1(M,\partial M;\mathbb{Z})\stackrel{\partial_*^1}{\longrightarrow}
\mathbb{Z}\stackrel{\imath_*^0}{\longrightarrow}
\mathbb{Z}\longrightarrow 0,
\end{align*}
from which follows the statement of the lemma.
\end{proof}
\begin{lemma}\label{2z} If $M=\mathbb T^2\times[0,1]$ then
\begin{equation}
H_3(M,\partial{M};\mathbb{Z})\cong\mathbb{Z},\quad H_2(M,\partial{M};\mathbb{Z})\cong\mathbb{Z}^2,\quad
H_1(M,\partial{M};\mathbb{Z})=\mathbb Z.
\end{equation}
\end{lemma}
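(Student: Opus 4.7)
The plan is to mirror exactly the argument used for Lemma \ref{1z}: set up the long exact sequence of the pair $(M,\partial M)$ with integer coefficients, substitute the homology of $M$ and of $\partial M$, and then exploit surjectivity of the inclusion-induced maps $\imath_*^k:H_k(\partial M;\mathbb{Z})\to H_k(M;\mathbb{Z})$ to split the sequence into short exact pieces from which the relative groups can be read off.

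First I would compute the input groups. Since $M=\mathbb{T}^2\times[0,1]$ deformation retracts onto $\mathbb{T}^2\times\{1/2\}$, one has $H_0(M;\mathbb{Z})\cong\mathbb{Z}$, $H_1(M;\mathbb{Z})\cong\mathbb{Z}^2$, $H_2(M;\mathbb{Z})\cong\mathbb{Z}$, $H_3(M;\mathbb{Z})=0$. The boundary is a disjoint union of two tori, $\partial M=(\mathbb{T}^2\times\{0\})\sqcup(\mathbb{T}^2\times\{1\})$, so $H_0(\partial M;\mathbb{Z})\cong\mathbb{Z}^2$, $H_1(\partial M;\mathbb{Z})\cong\mathbb{Z}^4$, $H_2(\partial M;\mathbb{Z})\cong\mathbb{Z}^2$, $H_3(\partial M;\mathbb{Z})=0$.

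Next I would analyse the inclusion-induced maps. Each boundary torus $\mathbb{T}^2\times\{t\}$ with $t\in\{0,1\}$ includes into $M$ as a homotopy equivalence (it is already a deformation retract of $M$). Consequently, for suitable choices of orientation, $\imath_*^0$, $\imath_*^1$ and $\imath_*^2$ act as the sum maps $(a,b)\mapsto a+b$, $(a_1,b_1,a_2,b_2)\mapsto(a_1+a_2,b_1+b_2)$, $(a,b)\mapsto a+b$ on $\mathbb{Z}^2\to\mathbb{Z}$, $\mathbb{Z}^4\to\mathbb{Z}^2$, $\mathbb{Z}^2\to\mathbb{Z}$ respectively. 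In particular each of these three maps is surjective, with kernels isomorphic to $\mathbb{Z}$, $\mathbb{Z}^2$, and $\mathbb{Z}$.

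Finally I would feed this into the long exact sequence of $(M,\partial M)$. Because each $\imath_*^k$ ($k=0,1,2$) is surjective, the sequence splits into the three short exact sequences
\begin{align*}
0\longrightarrow H_3(M,\partial M;\mathbb{Z})\stackrel{\partial_*^3}{\longrightarrow}\mathbb{Z}^2\stackrel{\imath_*^2}{\longrightarrow}\mathbb{Z}\longrightarrow 0,\\
0\longrightarrow H_2(M,\partial M;\mathbb{Z})\stackrel{\partial_*^2}{\longrightarrow}\mathbb{Z}^4\stackrel{\imath_*^1}{\longrightarrow}\mathbb{Z}^2\longrightarrow 0,\\
0\longrightarrow H_1(M,\partial M;\mathbb{Z})\stackrel{\partial_*^1}{\longrightarrow}\mathbb{Z}^2\stackrel{\imath_*^0}{\longrightarrow}\mathbb{Z}\longrightarrow 0,
\end{align*}
from which $H_3(M,\partial M;\mathbb{Z})\cong\mathbb{Z}$, $H_2(M,\partial M;\mathbb{Z})\cong\mathbb{Z}^2$, $H_1(M,\partial M;\mathbb{Z})\cong\mathbb{Z}$ follow. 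The only step requiring genuine care is the identification of the $\imath_*^k$: it must be justified that each boundary torus alone already surjects onto $H_k(M;\mathbb{Z})$, which is immediate from the product structure but is the place where a mistake could easily slip in (for instance, conflating the orientations of the two components of $\partial M$ that are induced by $M$ with independent orientations used in the disjoint-union decomposition of $H_*(\partial M;\mathbb{Z})$). Once this identification is in hand, the rest is mechanical.
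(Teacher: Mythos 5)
Your proof is correct and takes essentially the same approach as the paper: set up the long exact homology sequence of $(M,\partial M)$, observe that the inclusion-induced maps $\imath_*^k$ are surjective because each boundary torus is a deformation retract of $M$, and read off the relative groups from the resulting short exact sequences. Your explicit description of $\imath_*^k$ as sum maps $(a,b)\mapsto a+b$ is just a more concrete rendering of the paper's remark that each component inclusion is a homotopy equivalence; otherwise the arguments coincide.
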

\begin{proof}
As $M$ is homotopically equivalent to $\mathbb{T}^2$ and $\partial{M}$ is homeomorphic to $\mathbb{T}^2\times\mathbb S^0$ then $H_k(M;\mathbb{Z})\cong{H}_k(\mathbb T^2;\mathbb{Z})$ and $H_k(\partial{M};\mathbb{Z})\cong{H}_k(\mathbb T^2;\mathbb{Z})\times{H}_k(\mathbb T^2;\mathbb{Z})$.
In such situation the homological sequence \eqref{pair sequence} of the pair $(M,\partial M)$  has the following form:
\begin{multline}\label{pair sequence Z}
0\longrightarrow H_3(M,\partial{M};\mathbb{Z})\stackrel{\partial_*^3}{\longrightarrow}
\mathbb{Z}^2\stackrel{\imath_*^2}{\longrightarrow}
\mathbb{Z}\stackrel{\jmath_*^2}{\longrightarrow}
H_2(M,\partial{M};\mathbb{Z})\stackrel{\partial_*^2}{\longrightarrow}\\
{\longrightarrow}
\mathbb{Z}^4\stackrel{\imath_*^1}{\longrightarrow}
\mathbb{Z}^2\stackrel{\jmath_*^1}{\longrightarrow}
H_1(M,\partial{M};\mathbb{Z})\stackrel{\partial_*^1}{\longrightarrow}
\mathbb{Z}^2\stackrel{\imath_*^0}{\longrightarrow}
\mathbb{Z}\stackrel{\jmath_*^0}{\longrightarrow}0.
\end{multline}
As the inclusion of every connected component of $\partial M$ to ${M}$ is a homotopical equivalence then $\imath_*^2$ and $\imath_*^1$ are epimorphisms. Herewith \eqref{pair sequence Z} decomposes into short exact sequences
\begin{align*}
0& \longrightarrow H_3(M,\partial{M};\mathbb{Z})\stackrel{\partial_*^3}{\longrightarrow}
\mathbb{Z}^2\stackrel{\imath_*^2}{\longrightarrow}
\mathbb{Z}\longrightarrow 0,\\
0& \longrightarrow H_2(M,\partial{M};\mathbb{Z})\stackrel{\partial_*^2}{\longrightarrow}
\mathbb{Z}^4\stackrel{\imath_*^1}{\longrightarrow}
\mathbb{Z}^2\longrightarrow 0,\\
0& \longrightarrow H_1(M,\partial{M};\mathbb{Z})\stackrel{\partial_*^1}{\longrightarrow}
\mathbb{Z}^2\stackrel{\imath_*^0}{\longrightarrow}
\mathbb{Z}\longrightarrow 0,
\end{align*}
from which follows the statement of the lemma.
\end{proof}

\subsection{Eigenvalues of induced automorphisms}
In this section we again consider all homology groups $H_k(X,A;\mathbb{Z})$ with integer coefficients and cohomology groups $H^k(X,A)$ with real  coefficients. Firstly, by the  Universal Coefficient Formula \cite[Section 7, Chapter VI]{Dold}, the previous subsection results give the following calculations.
\begin{statement}\label{1} If $M$ is a handlebody of a genus $g\geqslant 0$ then $$H^3(M,\partial{M})\cong\mathbb{R},\, H^2(M,\partial{M})\cong\mathbb{R}^g,\,
H^1(M,\partial{M})=0,\,
H^0(M,\partial{M})=0.$$
\end{statement}

\begin{statement}\label{2} If $M=\mathbb T^2\times[0,1]$ then
$$H^3(M,\partial{M})\cong\mathbb{R},\, H^2(M,\partial{M})\cong\mathbb{R}^2,\,H^1(M,\partial{M})=\mathbb R,\,
H^0(M,\partial{M})=0.$$
\end{statement}

The groups $H^k(X,A)\cong\mathbb{R}^m$ admit many automorphisms even for $m=1$. But in some cases only a small part of them can be induced by homeomorphisms of the topological space $X$.

\begin{lemma}\label{H=Z}
Let $X$ be a topological space, $A\subset{X}$ be its subspace, $f:X\rightarrow{X}$ be homeomorphism, and $f(A)\subset{A}$. Denote by $H'_k(X,A;\mathbb{Z})$ a free part of the group of $k$-dimensional singular homology of the pair $(X,A)$, and by  $H^k(X,A)$ its $k$-dimensional cohomology group with real coefficients.
It $H'_k(X,A;\mathbb{Z})\cong\mathbb{Z}$ for some $k$, then for the induced automorphism $f^*:H^k(X,A)\rightarrow H^k(X,A)$ the equality $f^*=\pm\id$ holds.
\end{lemma}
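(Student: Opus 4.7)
The plan is to reduce the statement, via the Universal Coefficient Theorem, to an elementary observation about self-homomorphisms of $\mathbb{Z}$. First I would invoke the UCT for cohomology with real coefficients: since $\mathbb{R}$ is divisible and hence injective over $\mathbb{Z}$, the $\mathrm{Ext}$-term vanishes and one gets a natural isomorphism $H^k(X,A)\cong\Hom(H_k(X,A;\mathbb{Z}),\mathbb{R})$. Writing $H_k(X,A;\mathbb{Z})=H'_k(X,A;\mathbb{Z})\oplus T$ with $T$ the torsion subgroup and noting that $\Hom(T,\mathbb{R})=0$ because $\mathbb{R}$ is torsion-free, this yields $H^k(X,A)\cong\Hom(\mathbb{Z},\mathbb{R})\cong\mathbb{R}$. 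In particular $f^*$ acts on a one-dimensional real vector space and therefore is multiplication by some scalar.

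Next I would identify this scalar with an integer. The induced map $f_*$ on $H_k(X,A;\mathbb{Z})$ carries the torsion subgroup $T$ into itself and therefore descends to an endomorphism of the free quotient $H'_k\cong\mathbb{Z}$, which is multiplication by some $n\in\mathbb{Z}$. By naturality of UCT, the action of $f^*$ on $H^k(X,A)\cong\Hom(\mathbb{Z},\mathbb{R})\cong\mathbb{R}$ is multiplication by that same $n$, so the scalar in question is automatically an integer.

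Finally I would conclude that $n=\pm1$ by using that $f$ is a homeomorphism. Since $f$ induces a homeomorphism of the pair $(X,A)$ onto itself (which is the content of the word ``automorphism'' in the statement; equivalently $f(A)=A$), the inverse $f^{-1}$ is also a map of pairs, and $(f^{-1})_{*}\circ f_{*}=\id$ on $H_k(X,A;\mathbb{Z})$. Restricting to the free part forces $n$ to be a unit in $\mathbb{Z}$, whence $n=\pm1$ and $f^*=\pm\id$. The only delicate point is this last step: the stated hypothesis $f(A)\subset A$ by itself would at most guarantee $n\in\mathbb{Z}$, and one genuinely needs $f$ to preserve $A$ setwise in order to invert; everything else is purely algebraic bookkeeping via UCT.
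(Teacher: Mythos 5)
Your proof is correct and takes essentially the same route as the paper: reduce via the naturality of the Universal Coefficient isomorphism to the action of $f_*$ on the free part $H'_k(X,A;\mathbb{Z})\cong\mathbb{Z}$, then observe this action is $\pm\id$. The paper states that last step without argument (``If $H'_k(X,A;\mathbb{Z})\cong\mathbb{Z}$, then $f_*=\pm\id$''), tacitly treating $f_*$ on integral homology as an automorphism. You make the missing step explicit, and the caveat you raise at the end is genuine: the hypothesis $f(A)\subset A$ as written only gives an endomorphism $f_*$ of $\mathbb{Z}$, which could be multiplication by any integer; concluding $f_*=\pm\id$ requires $f_*$ to be a unit, e.g.\ because $f$ is a homeomorphism of the \emph{pair} ($f(A)=A$) so that $(f^{-1})_*$ inverts it. This is worth noting because in the paper's actual applications the diffeomorphism $g$ maps the relevant pair $(\tilde M_{i_0},\tilde M_{i_0-1})$ only \emph{into} itself, not onto itself, so the invertibility of $g_*$ on relative homology is not immediate from the stated hypotheses and deserves a word of justification (for instance via the filtration properties that make the relevant inclusions homotopy equivalences of pairs).
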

\begin{proof}
Let the automorphism $f_*:H'_k(X,A;\mathbb{Z})\rightarrow H'_k(X,A;\mathbb{Z})$ also induced by the homeomorphism $f$. The formula $f^*_h(q)=q\circ f_*$ defines the automorphism $f^*_h:\Hom(H'_k(X,A;\mathbb{Z});\mathbb{R})\rightarrow\Hom(H'_k(X,A;\mathbb{Z});\mathbb{R})$.
If $H'_k(X,A;\mathbb{Z})\cong\mathbb{Z}$, then $f_*=\pm\id$. Moreover, $f^*_h(q)=q\circ(\pm\id)=\pm{q}$ for all $q\in\Hom(H'_k(X,A;\mathbb{Z});\mathbb{R})$. Hence $f^*_h=\pm\id$.

It follows for the Universal Coefficient Formula for cohomology \cite[Chapter~VI, Section~7]{Dold} that there exists the natural isomorphism $\kappa:H^k(X,A)\rightarrow\Hom(H'_k(X,A;\mathbb{Z});\mathbb{R})$.
The naturalness means commutativity of the diagram
\begin{equation}\label{CD-kappa}
\begin{CD}
H^k(X,A) @>{\kappa}>> \Hom(H'_k(X,A;\mathbb{Z});\mathbb{R})\\
@V{f^*}VV                        @VV{f^*_h}V\\
H^k(X,A) @>{\kappa}>> \Hom(H'_k(X,A;\mathbb{Z});\mathbb{R}).
\end{CD}
\end{equation}
It follows from \eqref{CD-kappa} and the equation $f^*_h=\pm\id$ that
$f^*=\kappa^{-1}\circ f^*_h\circ\kappa=\kappa^{-1}\circ(\pm\id)\circ\kappa=\pm\id$.
\end{proof}

\begin{lemma}\label{H^0}
Let $X$ be a topological space with a finite number of path-connected components, $f:X\rightarrow{X}$ be a homeomorphism, and $f^*:H^0(X)\rightarrow H^0(X)$ be an induced automorphism. Then any eigenvalue $\lambda$ for $f^*$ satisfies the equality $\lambda^2=1$.
\end{lemma}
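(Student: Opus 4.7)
My plan is to exploit the rigidity of $H^0$ for a space with finitely many components. First I would recall that $H^0(X;\mathbb{R})$ is canonically isomorphic to the space of locally constant $\mathbb{R}$-valued functions on $X$; when $X$ has $n$ path-connected components $X_1,\dots,X_n$, this space has a natural basis consisting of the characteristic classes $[\chi_{X_i}]$, so $H^0(X;\mathbb{R})\cong\mathbb{R}^n$.

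Next I would observe that any homeomorphism $f:X\to X$ maps path-connected components onto path-connected components (path-connectedness being a topological invariant), so it induces a permutation $\sigma\in S_n$ with $f(X_i)=X_{\sigma(i)}$. In the natural basis above, the pullback $f^*$ acts as the permutation matrix of $\sigma^{-1}$: the class $[\chi_{X_i}]$ is sent to $[\chi_{f^{-1}(X_i)}]=[\chi_{X_{\sigma^{-1}(i)}}]$.

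Since $S_n$ is finite, $\sigma$ has some finite order $N$ (one could take, for example, $N=n!$, or the least common multiple of the cycle lengths of $\sigma$), and consequently $(f^*)^N=\id$. If $\lambda\in\mathbb{R}$ is an eigenvalue of $f^*$ with eigenvector $v\neq 0$, then applying $(f^*)^N$ yields $v=\lambda^N v$, hence $\lambda^N=1$. The only real $N$-th roots of unity are $\pm 1$, so $\lambda^2=1$, as required.

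I do not expect any serious obstacle; the content is essentially linear-algebraic. The only delicate point is the implicit convention that ``eigenvalue'' means a real eigenvalue of the real automorphism $f^*$, which is consistent with how the lemma is invoked in the proof of Theorem~\ref{mt} (to exclude eigenvalues of modulus greater than one in the $k=0$ case, and similarly for $H^0(\partial\tilde M_{i_0})$ in the $k=2$ case).
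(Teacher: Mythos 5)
Your proof is correct and follows essentially the same route as the paper: both identify $H^0(X)\cong\mathbb{R}^n$ via constant functions on path-components and observe that $f^*$ acts as a permutation matrix in that basis. You conclude from finite order of the permutation that $\lambda^N=1$, while the paper observes that the permutation matrix preserves the Euclidean norm and hence $\lambda^2=1$; both are immediate consequences of the same key structural observation.
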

\begin{proof}
Firstly consider a case when $X_1$ and $X_2$ are path-connected topological spaces and $f_2:X_1\rightarrow{X}_2$ is a homeomorphism.
All elements of groups $H^0(X_j)$ are constant functions $c_j:X_j\rightarrow\mathbb{R}$. Therefore, the formula $\nu_j(c_j)=\im{c}_j$ defines isomorphisms $\nu_j:H^0(X_j)\rightarrow\mathbb{R}$, $j=1,2$.
The induced isomorphism
$f_2^*:H^0(X_2)\rightarrow{H}^0(X_1)$ is defined by the formula $f_2^*(c_2)=c_2\circ{f}_2$. Since values of the functions $c_2$ and $c_2\circ{f}_2$ are equal, then
\begin{equation}\label{im-nu}
\nu_1\circ{f}^*_2=\nu_2.
\end{equation}

Now suppose that $X$ consists of path-connected components $X_1,\dots,X_m$.
Then there exists a permutation $\sigma\in{S}_m$ such that $f$ maps the component $X_j$ onto the component $X_{\sigma(j)}$ homeomorphically. Thus, setting $f_{\sigma(j)}(x)=f(x)$ for all $x\in{X}_j$, we obtain homeomorphisms $f_{\sigma(j)}:X_j\rightarrow{X}_{\sigma(j)}$, $j=1,\dots,m$. Moreover, the induced homomorphisms $f_j^*:H^0(X_j)\rightarrow{H}^0(X_{\tau(j)})$ are defined, where $\tau=\sigma^{-1}$.
By virtue of \eqref{im-nu}
\begin{equation}\label{im-nu-j}
\nu_{\tau(j)}\circ{f}_j^*=\nu_j,\quad j=1,\dots,m.
\end{equation}

For each element $c\in{H}^0(X)$ we set $c_j=c|_{X_j}$. Then $c_j\in{H}^0(X_j)$. Define isomorphisms $\mu:{H}^0(X)\rightarrow{H}^0(X_1)\times\dots\times{H}^0(X_m)$ and $\nu:{H}^0(X_1)\times\dots\times{H}^0(X_m)\rightarrow\mathbb{R}^m$ by the formulas $\mu(c)=(c_1,\dots,c_m)$ and $\nu((c_1,\dots,c_m))=(\nu_1(c_1),\dots,\nu_m(c_m))$. We construct the automorphism $p:\mathbb{R}^m\rightarrow\mathbb{R}^m$ such that the diagram is commutative
\begin{equation}\label{CD-p}
\begin{CD}
H^0(X) @>{\mu}>>
H^0(X_1)\times\dots\times H^0(X_m)  @>{\nu}>> \mathbb{R}^m\\
@VV{f^*}V          @VV{(f^*_1,\dots,f^*_m)}V              @VV{p}V\\
H^0(X) @>{\mu}>>
H^0(X_1)\times\dots\times H^0(X_m)  @>{\nu}>> \mathbb{R}^m.
\end{CD}
\end{equation}

For all $y=(y_1,\dots,y_m)\in\mathbb{R}^m$ we set $\|y\|=\sqrt{y_1^2+\dots+y_m^2}$.
Since $f_j^*$ maps $H^0(X_j)$ onto ${H}^0(X_{\tau(j)})$, then it follows from the equality \eqref{im-nu-j} and the diagram \eqref{CD-p} that $p(y)=(y_{\tau(1)},\dots,y_{\tau(1)})$.
Moreover, $\|{p}(y)\|=\|y\|$.

Finally, let $\lambda\in\mathbb{R}$, $c\in{H}^0(X)$, $c\ne0$ and $f^*(c)=\lambda{c}$. We set $y=\nu\circ\mu(c)$. Then by virtue of \eqref{CD-p} $p(y)=\mu\circ\nu(f^*(c))=\mu\circ\nu(\lambda{c})=\lambda{y}$.
Hence, according to what was proved above, we obtain $\|y\|^2=\|{p}(y)\|^2=\lambda^2\|y\|^2$. Hence, $\lambda^2=1$.
\end{proof}

\section{On oriented two-fold covering}\label{sec:two-fold_covering}
Let $M$ be a non-orientable connected smooth $n$-manifold, $a\in M$ and $x:I\rightarrow M$ be a loop based at a point  $a$. Let us consider continuous vector fields $X_1,\dots,X_n$ along $x$ such that $X_1(t),\dots,X_n(t)$ linearly independent for each $t\in{I}$. Then there is a matrix  $A=(a_i^j)\in{\rm GL}_n(\mathbb{R})$ such that
\begin{equation}\label{matrix A}
X_i(1)=a_i^jX_j(0),\quad i,j=1,\dots,n.
\end{equation}
Let $\omega_a(x)={\rm sign}\det{A}$. If $y$ is a loop which based at the same starting point and $x\sim{y}$ then $\omega_a(x)=\omega_a(y)$. Therefore the formula $\omega_a([x])=\omega_a(x)$ defines a homeomorphism $\omega_a:\pi_1(M,a)\rightarrow{G}$, where  $G=\{1,-1\}$. The manifold $M$ is orientable if and only if $\ker\omega_a=\pi_1(M,a)$.

Let $a,b\in{M}$, $z:I\rightarrow M$ be a path which starts in  $z(0)=a$ and ends in $z(1)=b$ and $T_z:\pi_1(M,a)\rightarrow\pi_1(M,b)$ be the isomorphism defined by the formula $T_z([x])=[z^{-1}xz]$. Then  $zz^{-1}\sim 1_a$ and  $z^{-1}z\sim 1_b$ implies commutativity of the diagram
\begin{equation}\label{CD-omega-Tz}
\begin{CD}
\pi_1(M,a) @>{\omega_a}>> \mathbb{R}\\
@V{T_z}VV                 @VV{{\rm id}}V\\
\pi_1(M,b) @>{\omega_b}>> \mathbb{R}.
\end{CD}
\end{equation}

\begin{lemma}\label{omega--f}
Let $M,N$ be connected smooth manifolds, $f:M\rightarrow N$ be a local diffeomorphism,
$a\in M$, $b=f(a)$ and $f_*:\pi_1(M,a)\rightarrow\pi_1(N,b)$ be an induced homeomorphism. Then the following diagram is commutative
\begin{equation}\label{CD-omega--f}
\begin{CD}
\pi_1(M,a) @>{\omega_a}>> \mathbb{R}\\
@V{f_*}VV                 @VV{{\rm id}}V\\
\pi_1(N,b) @>{\omega_b}>> \mathbb{R}.
\end{CD}
\end{equation}
\end{lemma}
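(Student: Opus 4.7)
The plan is to compute $\omega_b(f_*([x]))$ directly by transporting a frame along a representative loop through $df$. Once the definitions are unpacked, this reduces to a one-line linear-algebra observation, so the proof should be short.

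First I would take a loop $x:I\to M$ representing $[x]\in\pi_1(M,a)$ and choose continuous vector fields $X_1,\dots,X_n$ along $x$ that are pointwise linearly independent, so that by the definition recalled just before the lemma there is a matrix $A=(a_i^j)\in\mathrm{GL}_n(\mathbb{R})$ with $X_i(1)=a_i^j X_j(0)$ and $\omega_a([x])=\mathrm{sign}\det A$.

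Next I would push the frame forward by setting $Y_i(t)=df_{x(t)}(X_i(t))$. Since $f$ is a local diffeomorphism, every $df_{x(t)}$ is a linear isomorphism, so $Y_1,\dots,Y_n$ form a continuous, pointwise linearly independent frame along the loop $f\circ x$ based at $b$. The key point is that $x(0)=x(1)=a$, so $X_i(0),X_i(1)\in T_aM$, and applying the single linear map $df_a$ to the relation $X_i(1)=a_i^j X_j(0)$ yields $Y_i(1)=a_i^j Y_j(0)$ with the \emph{same} matrix $A$. Hence $\omega_b([f\circ x])=\mathrm{sign}\det A=\omega_a([x])$, and since $f_*([x])=[f\circ x]$ by definition of the induced homomorphism, the diagram \eqref{CD-omega--f} commutes.

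There is no real obstacle: the well-definedness of $\omega_a$ and $\omega_b$ on homotopy classes, recalled in the paragraph preceding the lemma, frees me from worrying about the choices of loop and frame, and the transition matrix $A$ is an intrinsic object in $T_aM$ that is transported verbatim to $T_bN$ by the isomorphism $df_a$. The only thing worth double-checking is that the induced homomorphism $f_*$ is defined by post-composition of loops with $f$, which is standard.
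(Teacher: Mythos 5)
Your proof is correct and follows exactly the same route as the paper: choose a frame $X_1,\dots,X_n$ along a representative loop, push it forward via $df$ to a frame $Y_i=df\circ X_i$ along $f\circ x$ (which stays a frame because $f$ is a local diffeomorphism), and observe that the transition matrix $A$ at the basepoint is preserved by linearity of $df_a$, so $\omega_b(f_*[x])=\operatorname{sign}\det A=\omega_a([x])$. No meaningful difference from the paper's argument.
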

\begin{proof} Let $[x]\in\pi_1(M,a)$, $X_1,\dots,X_n$ be  continuous vector fields along $x$, linearly independent at each point $x(t)$, and the equality \eqref{matrix A} is satisfied. Let  $y=f\circ{x}$ and $Y_i(t)=df_{x(t)}(X_i(t))$ for every $i=1,\dots,n$ and $t\in{I}$.
Then $[y]\in\pi_1(N,b)$, $[y]=f_*([x])$ and $Y_1,\dots,Y_n$ are continuous vector fields along the loop $y$. According to the condition,  $df_{x(t)}:T_{x(t)}M\rightarrow T_{y(t)}N$ are  isomorphisms. Therefore $Y_1(t),\dots,Y_n(t)$ are linearly dependent for all  $t\in{I}$.
But
$$
Y_i(1)=df_a(X_i(1))=df_a(a_i^jX_j(0))=a_i^jdf_a(X_j(0))=a_i^jY_j(0).
$$ from \eqref{matrix A} and the linearity of the differential $df_a:T_aM\rightarrow T_bN$. Thus, $\omega_b([y])={\rm sign}\det{A}=\omega_a([x])$.
\end{proof}

\begin{lemma}\label{bar f}
Let $M$ be a non-orientable connected smooth manifold and $f:M\rightarrow M$ be a diffeomorphism. Then there exists a connected smooth orientable manifold $\bar{M}$, a smooth two-fold cover $p:\bar{M}\rightarrow M$ and a diffeomorphism  $\bar{f}:\bar{M}\rightarrow\bar{M}$ for which the diagram is commutative
\begin{equation}\label{CD-covering-diffeomorphism}
\begin{CD}
\bar{M}           @>{\bar{f}}>> \bar{M}\\
@V{p}VV               @VV{p}V\\
{M}               @>{f}>>       {M}.
\end{CD}
\end{equation}
\end{lemma}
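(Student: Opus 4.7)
The plan is to construct $\bar M$ as the orientation double cover of $M$ and then apply the standard lifting criterion from covering-space theory, using Lemma \ref{omega--f} to verify its hypothesis. The existence of the oriented two-fold cover is classical; the only new ingredient is simultaneously lifting the given diffeomorphism.

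First I would construct $\bar M$ concretely. As a set take
\[\bar M = \{(x, o_x) : x \in M,\; o_x \text{ is one of the two orientations of } T_x M\},\]
with $p(x, o_x) = x$. Smooth charts of $M$ induce a smooth structure on $\bar M$ for which $p$ is a smooth local diffeomorphism and a two-fold cover, and the tautological choice of $o_x$ at each point of $\bar M$ yields an orientation of $\bar M$. Since $M$ is non-orientable, there exists a loop $\gamma$ based at some $a\in M$ with $\omega_a([\gamma]) = -1$; such a loop lifts to a path joining the two preimages of $a$, so $\bar M$ is connected, and more generally a loop in $M$ lifts to a loop in $\bar M$ if and only if its $\omega$-value is $+1$. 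This yields the key identification $p_*(\pi_1(\bar M, \bar a)) = \ker \omega_a$ for every $\bar a\in p^{-1}(a)$.

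Next I would lift $f$. Fix $a\in M$, $\bar a\in p^{-1}(a)$ and $\bar b\in p^{-1}(f(a))$. The covering-space lifting criterion guarantees a continuous lift $\bar f:\bar M\to\bar M$ of $f\circ p$ with $\bar f(\bar a)=\bar b$ precisely when
\[f_*\bigl(p_*(\pi_1(\bar M, \bar a))\bigr)\subset p_*(\pi_1(\bar M, \bar b)),\]
i.e.\ $f_*(\ker\omega_a)\subset\ker\omega_{f(a)}$. This inclusion is exactly the content of Lemma \ref{omega--f}: the diagram (\ref{CD-omega--f}) says $\omega_{f(a)}\circ f_*=\omega_a$, so every class killed by $\omega_a$ is killed by $\omega_{f(a)}\circ f_*$. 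Because $p$ is a local diffeomorphism, the continuous lift $\bar f$ is automatically smooth, and the identity $p\circ\bar f=f\circ p$ is the commutativity required in (\ref{CD-covering-diffeomorphism}).

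Finally, the same lifting argument applied to $f^{-1}$, with basepoints $\bar b\mapsto\bar a$, produces a smooth lift $\bar g:\bar M\to\bar M$ of $f^{-1}$ with $\bar g(\bar b)=\bar a$. Then $\bar g\circ\bar f$ is a smooth lift of $\id_M$ that fixes $\bar a$, so by uniqueness of lifts it equals $\id_{\bar M}$; similarly $\bar f\circ\bar g=\id_{\bar M}$. Hence $\bar f$ is a diffeomorphism. The main obstacle is the identification $p_*(\pi_1(\bar M,\bar a))=\ker\omega_a$, which translates the abstract algebraic lifting condition into a statement about preservation of local orientations along loops; once that bridge is in place, Lemma \ref{omega--f} supplies the required inclusion and the rest is routine covering-space bookkeeping.
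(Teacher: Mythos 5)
Your proof is correct and follows essentially the same route as the paper: build the orientation double cover, use Lemma \ref{omega--f} to verify the covering-space lifting criterion, lift $f$ and then $f^{-1}$, and conclude invertibility by uniqueness of lifts. The only cosmetic difference is that you construct $\bar{M}$ explicitly as the bundle of local orientations, which makes $p_*(\pi_1(\bar{M},\bar{a}))=\ker\omega_a$ hold at every basepoint at once, whereas the paper invokes the abstract existence theorem for covers and then transports the identification from $a$ to $b=f(a)$ via the change-of-basepoint isomorphism $T_z$.
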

\begin{proof} Let $a\in M$. Then ${\rm ker}\,\omega_a$ is the normal divisor of the group $\pi_1(M,a)$. By the theorem of the existence of covers, there will be a connected smooth manifold $\bar{M}$, a regular smooth cover $p:\bar{M}\rightarrow M$ and a point $u\in\bar{M}$ such that $p(u)=a$ and the induced homomorphism  $p_*^u:\pi_1(\bar{M},u)\rightarrow\pi_1(M,a)$ has the image ${\rm im}\,{p}_*^u={\rm ker}\,\omega_a$. As the manifold $M$ is non-orientable then $\pi_1(M,a)/{\rm ker}\,\omega_a\cong{G}$. Therefore  $p$ is a two-fold covering. As  $p_*^u:\pi_1(\bar{M},u)\rightarrow\ker\omega_a$ is an isomorphism then by Lemma \ref{omega--f} we get  $\ker\omega_u=\pi_1(\bar{M},u)$.
That means $\bar{M}$ is an orientable manifold.

Let $b=f(a)$ and $v\in{p}^{-1}(b)$. As the manifold $\bar{M}$ is connected then there is a path $\bar{z}:I\rightarrow\bar{M}$ with the starting in $\bar{z}(0)=u$ and the end in $\bar{z}(1)=v$. Let $z=p\circ\bar{z}$. Then $z(0)=a$, $z(1)=b$ and
\begin{equation}\label{impuimpv}
{\rm im}\,{p}_*^v=T_z({\rm im}\,{p}_*^u).
\end{equation}
As $T_z:\pi_1(M,a)\rightarrow\pi_1(M,b)$ is an isomorphism then \eqref{CD-omega-Tz} implies
\begin{equation}\label{Tz-ker-omegaa}
\ker\omega_b=T_z(\ker\omega_a).
\end{equation}
Finitely, as $f_*:\pi_1(M,a)\rightarrow\pi_1(M,b)$  is an isomorphism, \eqref{CD-omega--f} implies the equality
\begin{equation}\label{f-ker-omegaa}
\ker\omega_b=f_*(\ker\omega_a).
\end{equation}
If follows from \eqref{impuimpv}, \eqref{Tz-ker-omegaa}, \eqref{f-ker-omegaa} and the equality ${\rm im}\,{p}_*^u={\rm ker}\,\omega_a$ that
$$
{\rm im}\,(f\circ{p})_*^u=f_*({\rm im}\,{p}_*^u)=f_*({\rm ker}\,\omega_a)={\rm ker}\,\omega_b=
T_z({\rm ker}\,\omega_a)=T_z({\rm im}\,{p}_*^u)={\rm im}\,{p}_*^v.
$$

According to a theorem from the theory of covering, in such a situation there is a map $\bar{f}:\bar{M}\rightarrow\bar{M}$ such that $\bar{f}(u)=v$ and the diagram \eqref{CD-covering-diffeomorphism} is commutative.
This mapping is uniquely defined and is smooth.
Similarly, it is proved that for the inverse diffeomorphism $f^{-1}:M\rightarrow{M}$ there is a smooth map $\overline{f^{-1}}:\bar{M}\rightarrow\bar{M}$ such that $\overline{f^{-1}}(v)=u$ and the following diagram  is commutative
\begin{equation}\label{CD-covering-diffeomorphism-1}
\begin{CD}
\bar{M}           @>{\overline{f^{-1}}}>> \bar{M}\\
@V{p}VV               @VV{p}V\\
{M}               @>{f^{-1}}>>       {M}.
\end{CD}
\end{equation}
Adding \eqref{CD-covering-diffeomorphism-1} to \eqref{CD-covering-diffeomorphism} on the right and on the left, we get the equality $\overline{f^{-1}}\circ\bar{f}={\rm id}$ and $\bar{f}\circ\overline{f^{-1}}={\rm id}$. Therefore $\overline{f^{-1}}=\bar{f}^{-1}$ and $\bar{f}$ is a diffeomorphism.
\end{proof}

\begin{lemma}\label{oro} Let $M$ be a smooth closed non-orientable connected 3-manifold and $W^1,\,W^2\subset M$ be immersions of open balls $D^1,\,D^2$ accordingly, such that $\Ind_{x}(W^1,W^2)=\Ind_{y}({W}^1,{W}^2)$ for every points $x,\,y\in ( W^1\cap W^2)$. If $p:\bar{M}\rightarrow{M}$ is an oriented double covering then $\bar W^1=p^{-1}(W^1),\,\bar W^2=p^{-1}(W^2)$ be immersions of two copies  of open balls $D^1,\,D^2$ accordingly, $\bar W^1=\bar W^1_1\sqcup \bar W^1_2$, $\bar W^2=\bar W^2_1\sqcup \bar W^2_2$,  and  $\Ind_{\bar x}(\bar W^1_i,\bar W^2_j)=\Ind_{\bar{y}}(\bar{W}^1_i,\bar{W}^2_j)$ for every points $\bar x,\,\bar y\in (\bar W^1_i\cap \bar W^2_j)$, $i,\,j=1,2$.
\end{lemma}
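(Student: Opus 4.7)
The plan is to lift everything to $\bar M$ along the covering $p$ and observe that, because the balls $D^1, D^2$ are contractible, the covering splits over the relevant subsets, so the intersection index is preserved pointwise.

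First, I would establish the splitting of the preimages. Because $J^k$ restricted to $D^k$ is an embedding, $W^k$ is an embedded open ball in $M$, hence simply connected. Restricting the two-fold covering $p$ to $p^{-1}(W^k)$ gives a covering of a simply connected space, which must be trivial; thus $p^{-1}(W^k)=\bar W^k_1\sqcup\bar W^k_2$, where each $p|_{\bar W^k_i}:\bar W^k_i\to W^k$ is a diffeomorphism. Composing with $(p|_{\bar W^k_i})^{-1}\circ J^k$ produces parametrizations $\bar J^k_i:D^k\to\bar M$ of $\bar W^k_i$ satisfying $p\circ\bar J^k_i=J^k$. This already shows that each $\bar W^k_i$ is an immersed (indeed embedded) copy of $D^k$, so the definition of the intersection index is applicable inside $\bar M$.

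Next I would lift the tubular neighborhood data needed to define the sign function. The tubular neighborhood $U^2$ of $W^2$ deformation retracts onto $W^2$, so it too is simply connected; by the same argument $p^{-1}(U^2)=\bar U^2_1\sqcup\bar U^2_2$, with $p|_{\bar U^2_j}:\bar U^2_j\to U^2$ a diffeomorphism and $\bar U^2_j$ a tubular neighborhood of $\bar W^2_j$. Declaring $\bar U^2_{j,\pm}:=(p|_{\bar U^2_j})^{-1}(U^2_\pm)$, the two connected components of $\bar U^2_j\setminus\bar W^2_j$ are labeled $\bar\pm$, and the associated sign function $\bar\sigma_j:\bar U^2_{j,+}\cup\bar U^2_{j,-}\to\mathbb Z$ satisfies the tautological relation $\bar\sigma_j=\sigma\circ p|_{\bar U^2_j}$.

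Finally, I would carry out the index computation. Let $\bar x\in\bar W^1_i\cap\bar W^2_j$ be a transverse intersection point, write $\bar x=\bar J^1_i(t)$ and $x=p(\bar x)=J^1(t)\in W^1\cap W^2$. Transversality and the fact that $p$ is a local diffeomorphism at $\bar x$ imply that $W^1$ meets $W^2$ transversally at $x$, and the same $\delta>0$ can be used at both levels so that $J^1(t-2\delta,t+2\delta)\subset U^2$ and $\bar J^1_i(t-2\delta,t+2\delta)\subset\bar U^2_j$. Then
\[
\Ind_{\bar x}(\bar W^1_i,\bar W^2_j)=\bar\sigma_j(\bar J^1_i(t+\delta))-\bar\sigma_j(\bar J^1_i(t-\delta))=\sigma(J^1(t+\delta))-\sigma(J^1(t-\delta))=\Ind_x(W^1,W^2),
\]
using $\bar\sigma_j=\sigma\circ p|_{\bar U^2_j}$ and $p\circ\bar J^1_i=J^1$. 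Since $p(\bar W^1_i\cap\bar W^2_j)\subset W^1\cap W^2$ and by hypothesis $\Ind_x(W^1,W^2)$ is independent of $x\in W^1\cap W^2$, independence of $\Ind_{\bar x}(\bar W^1_i,\bar W^2_j)$ from $\bar x\in\bar W^1_i\cap\bar W^2_j$ follows for each pair $(i,j)$. There is no real technical obstacle; the only point requiring care is verifying that the tubular neighborhood $U^2$ is itself simply connected so that the covering trivializes over it, which is immediate from its retraction onto the ball $W^2$.
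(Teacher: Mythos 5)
Your proof is correct and follows essentially the same route as the paper: lift along $p$ over the simply connected tubular neighborhood $U^2$ (the paper uses contractibility of $U^k$), pull back the sign function, and observe that the index at a lifted point agrees with the index downstairs, so constancy is inherited. Your only addition is to make explicit the compatibility $\bar\sigma_j=\sigma\circ p|_{\bar U^2_j}$ by choosing $\bar U^2_{j,\pm}$ as preimages of $U^2_\pm$, a detail the paper leaves implicit when asserting $\Ind_x(W^1,W^2)=\Ind_{\bar x}(\bar W^1_i,\bar W^2_j)$.
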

\begin{proof} Consider a tubular neighborhood $U^k$ of the submanifolds $W^k$. Since the open subsets $U^k\subset{M}$, $k=1,2$, are contractible, they are regular  covered neighborhoods. That is  $p^{-1}(U^k)=\bar{U}^k_1\cup\bar{U}^k_2$, where $\bar{U}^k_1\cap\bar{U}^k_2=\emptyset$ and  $p|_{\bar{U}^k_i}:\bar{U}^k_i\rightarrow{U^k}$ are diffeomorphisms, $i=1,2$.
Then the sets $\bar{U}^k_i$ are tubular neighborhoods of smooth submanifolds $\bar{W}^k_i\subset\bar{M}$, and the differences $\bar{U}^2_i\setminus\bar{W}^2_i$ consist of the connected components $\bar{U}^2_{i+}$ и $\bar{U}^2_{i-}$.

Let $\bar\sigma_i:\bar{U}^2_{i+}\cup\bar{U}^2_{i-}\rightarrow\mathbb{Z}$ be a function such that $\bar\sigma(\bar{x})=1$ for $\bar{x}\in\bar{U}^2_{i+}$ and $\bar\sigma(\bar{x})=0$ for $\bar{x}\in\bar{U}^2_{i-}$.
As $\bar W^1_i=(p|_{\bar W^1_i})^{-1}(J^1(D^1))$ then
the intersection index in $\bar{x}\in(\bar W^1_i\cap\bar W^2_j)$ is equal to $\Ind_{\bar{x}}(\bar{W}^1_i,\bar{W}^2_j)=\bar\sigma(t+\delta)-\bar\sigma(t-\delta)$, where $\delta$ is a small enough positive number.
Then $\Ind_x(W^1,W^2)=\Ind_{\bar{x}}(\bar{W}^1_i,\bar{W}^2_j)$ and $\Ind_y(W^1,W^2)=\Ind_{\bar{y}}(\bar{W}^1_i,\bar{W}^2_j)$.
So if $\Ind_{x}(W^1,W^2)=\Ind_{y}({W}^1,{W}^2)$ for every points $x,\,y\in ( W^1\cap W^2)$
then $\Ind_{\bar x}(\bar W^1_i,\bar W^2_j)=\Ind_{\bar{y}}(\bar{W}^1_i,\bar{W}^2_j)$ for every points $\bar x,\,\bar y\in (\bar W^1_i\cap \bar W^2_j)$, $i,\,j=1,2$.
\end{proof}

\section{Example of a diffeomorphism with a non-orientable expanding 2-dimensional attractor}\label{sec:example}

Let us construct an example of an $\Omega$-stable diffeomorphism of a closed connected 3-manifold $M^3$ the non-wandering set of which consists of trivial sources, saddles, and a non-orientable expanding 2-dimensional attractor $\Lambda$.

We will start with hyperbolic toral automorphism $L_A:\mathbb{T}^3\to\mathbb{T}^3$ induced by linear map of $\mathbb{R}^3$ with a hyperbolic matrix $A\in GL(3,\mathbb{Z})$, eigenvalues $\lambda_1$, $\lambda_2$, $\lambda_3$ of which such that $0<\lambda_1<1<\lambda_2\leqslant\lambda_3$. The involution $J:\mathbb{T}^3\to\mathbb{T}^3$ defined by the formula $J(x)=-x\pmod{1}$ has 8 fixed points in the 3-torus of the form $(a,b,c)$, where $a,b,c\in\{0,\frac12\}$. Notice that these points are also fixed for $L_A^k$ for some $k\in\mathbb{N}$. Let us ``blow up'' these points like to the classical Smale surgery and such that the surgery commutes with the involution. We will obtain generalized DA-diffeomorphism $f_{GDA}:\mathbb{T}^3\to\mathbb{T}^3$ with 8 fixed sources $\alpha_i$, $i\in\{1,2,\ldots,8\}$ and one 2-dimensional expanding attractor obtained from the diffeomorphism $L_A^k$.

After that we will remove all sources and factorize the basin of the attractor to obtain a new manifold $\tilde M$, i.e. $\tilde M=(\mathbb{T}^3\setminus\bigcup\limits^8_{i=1}{\alpha_i})/_{x\sim -x}$. The natural projection $p:\mathbb{T}^3\setminus\bigcup\limits^8_{i=1}{\alpha_i}\to \tilde M$ is a 2-fold cover. As $f_{GDA}J=Jf_{GDA}$ then $f_{GDA}$ is projected to $\tilde M$ by the diffeomorphism $\tilde f=pf_{GDA}p^{-1}:\tilde M\to\tilde M$ with one 2-dimensional expanding attractor $\Lambda$ and $\tilde M$ is its basin.
The set $\tilde M\setminus \Lambda$ consists of 8 connected components $\tilde N_i$ each of which is diffeomorphic to $\mathbb RP^2\times \mathbb{R}$, where $\mathbb RP^2$ is the real projective plane.

To obtain a fundamental domain $\tilde D_{i}$ of $\tilde f|_{\tilde N_i}$ we can consider a local coordinates $(x,y,z):U_{i}\to \mathbb{R}^3$ in a neighborhood $U_i$ of $\alpha_i$ in which the diffeomorphism $f_{GDA}$ has a form $f_{GDA}(x,y,z)=(2x,2y,2z)$. A fundamental domain of $f_{GDA}|_{W^u_{\alpha_i}\setminus{\{\alpha_i\}}}$ is $D_i=\{(x,y,z)\in\mathbb{R}^3\,|\,1\leqslant x^2+y^2+z^2\leqslant 4\}$  and then the desired fundamental domain $\tilde D_{i}=p(D_i)$. By the construction it is homeomorphic to $RP^2\times [0,1]$. The orbit space of $f_{GDA}|_{W^u_{\alpha_i}\setminus{\{\alpha_i\}}}$ is homeomorphic to $S^2\times S^1$ since each orientation preserving diffeomorphism of $S^2$ is homotopic to identity. Then the orbit space ${\tilde N}_i/{\tilde f}$ can be obtained as $S^2\times S^1|_{\tilde J}$, where $\tilde J$ is involution of $S^2\times S^1$ induced by $J$. Since ${\tilde N}_i/{\tilde f}$ is non-orientable, it follows from \cite{JahrenKwasik2011} that ${\tilde N}_i/{\tilde f}$ is either $S^2\tilde{\times} S^1$, $RP^2\times S^1$, or $RP^3\# RP^3$. The orbit space  ${\tilde N}_i/{\tilde f}$ can also be obtained from the fundamental domain $\tilde{D}_i$ as a mapping torus $RP^2\times [0,1]|_{(x,0)\sim (\tilde{f}(x),1)}$. Hence a fundamental group of the orbit space $\pi_1({\tilde N}_i/{\tilde f})=\mathbb{Z}_2\rtimes_{\tilde{f}}\mathbb{Z}$ and  then it can be only $RP^2\times S^1$.

Consider a gradient-like diffeomorphism $g_1:\mathbb RP^2\to\mathbb RP^2$ with exactly 3 fixed points: a source $\alpha$, a sink $\omega$ and a saddle $\sigma$ (see Fig. \ref{RP2}).
\begin{figure}[h!]
\centerline{\includegraphics
[width=6 true cm]{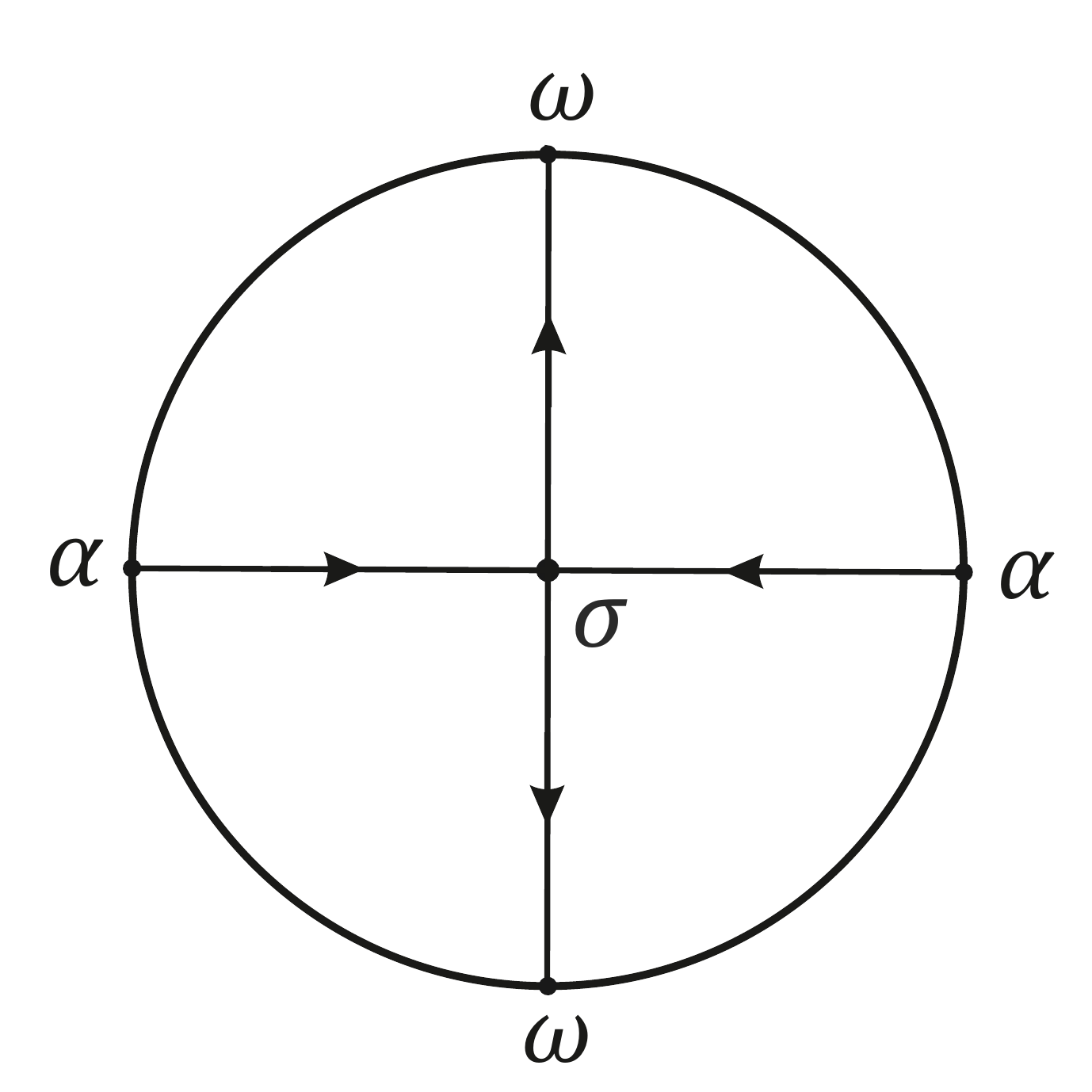}}
\caption{\small Diffeomorphism $g$  on the projective plane}\label{RP2}
\end{figure}
Let $g_2:\mathbb R\to\mathbb R$ be a diffeomorphism given by the formula $g_{2}(x)=2x$ and $g(w,x)=(g_1(w),g_2(x)):\mathbb RP^2\times\mathbb R\to\mathbb RP^2\times\mathbb R$.  Let us denote $N_1,N_2$ the connected components of $\mathbb RP^2\times(\mathbb{R}\setminus\{0\})$.  Analogically with cases with $\tilde N_i$ the orbit spaces  ${N_j}/g$ are diffeomorphic to $\mathbb RP^2\times\mathbb S^1$.

As ${\tilde N_i}/\tilde f$ are diffeomorphic to ${N_j}/g$ then there is a diffeomorphism $h:\tilde N_i\to N_j$ conjugating $\tilde f$ with $g$. Let $h_i:\tilde N_i\to N_1,\,i=1,3,5,7$ and $h_i:\tilde N_i\to N_2,\,i=2,4,6,8$ be such diffeomorphisms. For $\tilde N=\bigcup\limits_{i=1}^8\tilde N_i$ denote by $h:\tilde N\to (N_1\sqcup N_2)\times\mathbb Z_4$ a diffeomorphism composed by $h_i,\,i\in\{1,\dots,8\}$. Let $\tilde P=\mathbb RP^2\times\mathbb R\times\mathbb Z_4$ and $G:\tilde P\to\tilde P$ be a diffeomorphism composed by $g$ on every copy of $\mathbb RP^2\times\mathbb R$. Finitely, let $M^3=\tilde M\cup_h \tilde P$.
Denote by $q:\tilde M\sqcup \tilde P\to M^3$ the natural projection. Then the desired diffeomorphism $ f: M^3 \to M^3 $ coincides with the diffeomorphism $q \tilde f q^{- 1} | _{q (\tilde M)} $ on $q (\tilde M) $ and with the diffeomorphism $q G q^{- 1} | _{q (\tilde P)} $ on $q (\tilde P)$.
\bibliographystyle{ieeetr}
\bibliography{biblio}
\end{document}